\newtheorem{teo}{Theorem}[section]
\newtheorem{remark}{Remark}[section]
\newtheorem{prop}{Proposition}[section]
\newtheorem{lemma}{Lemma}[section]
\newcounter{yuppo}
\newtheorem{yuppi}{Theorem}[yuppo]
\newcommand{\cds}{\cdots}
\newcommand{\cd}{\cdot}
\renewcommand{\phi}{\varphi}
\newcommand{\ra}{\rightarrow}
\newcommand{\lra}{\longrightarrow}
\newcommand{\C}{\mathbb{C}}
\newcommand{\R}{\mathbb{R}}
\newcommand{\SU} {\operatorname{SU}}
\newcommand{\su} {\mathfrak{su}}
\newcommand{\Sl}{\operatorname{SL}}
\newcommand{\Gl}{\operatorname{GL}}
\newcommand{\Ad}{\operatorname{Ad}}
\newcommand{\meno}{^{-1}}
\newcommand{\Zeta}{{\mathbb{Z}}}
\newcommand{\liu}{\mathfrak{u}}
\newcommand{\lia}{\mathfrak{a}}
\newcommand{\liek}{\mathfrak{k}}
\newcommand{\lier}{\mathfrak{r}}
\newcommand{\lieg}{\mathfrak{g}}
\newcommand{\lieb}{\mathfrak{b}}
\newcommand{\liep}{\mathfrak{p}}
\newcommand{\lieq}{\mathfrak{q}}
\newcommand{\lied}{\mathfrak{d}}
\newcommand{\liez}{\mathfrak{z}}
\newcommand{\lies}{\mathfrak{s}}
\newcommand{\vacuo}{\emptyset}
\newcommand{\OO}{\mathcal{O}}               % coadjoint orbit
\renewcommand{\c}{{\widehat{\OO}}}          % convex envelope
\newcommand{\polp}{{P}}                     % momentum polytope
\newcommand{\ext}{\operatorname{ext}}       % set of extreme points of a convex set
\newcommand{\sx}{\langle}                   % scalar product
\newcommand{\xs}{\rangle}
\newcommand{\relint}{\operatorname{relint}} % relative interior of a convex body
\newcommand{\faces}{\mathscr{F}(E)}       % faces of \OO
\newcommand{\facesp}{\mathscr{F}(\polp)}    % faces of P that contain \x
\newcommand{\CF}{C_F}
\newcommand{\scalo}{\sx \, , \, \xs}
\newcommand{\noparty}[1]{}%{\colorbox{Apricot}{#1}}
\newcommand{\xmax}{X_{\max}^\beta}
\newcommand{\mup}{\mu_\liep}
\newcommand{\mua}{\mu_\lia}
\newcommand{\mupb}{\mu_\liep^\beta}
\newcommand{\nora}{\mathcal N_{K} (\lia)}
\newcommand{\Dir}{\mathrm{Dir}}
\newcommand{\cc}{E}
\newcommand{\perpp}{\perp}
\begin{document}

\title{Invariant convex sets in polar representations}
\author{Leonardo Biliotti} \author{Alessandro Ghigi} \author{Peter
  Heinzner}
% \today
\subjclass[2000]{22E46; % Semisimple Lie groups and their representations
  53D20 %Momentum maps; symplectic reduction
}
\begin{abstract}
  We study a compact invariant convex set $E$ in a polar
  representation of a compact Lie group. Polar rapresentations are
  given by the adjoint action of $K$ on $\liep$, where $K$ is a
  maximal compact subgroup of a real semisimple Lie group $G$ with Lie
  algebra $\lieg = \liek \oplus \liep$. If $\lia \subset \liep$ is a
  maximal abelian subalgebra, then $P=E\cap \lia$ is a convex set in
  $\lia$.  We prove that up to conjugacy the face structure of $E$ is
  completely determined by that of $P$ and that a face of $E$ is
  exposed if and only if the corresponding face of $P$ is exposed.  We
  apply these results to the convex hull of the image of a restricted
  momentum map.
\end{abstract}
\address{Universit\`{a} di Parma} \email{leonardo.biliotti@unipr.it}
\address{Universit\`a di Milano Bicocca}
\email{alessandro.ghigi@unimib.it} \address{Ruhr Universit\"at Bochum}
\email{peter.heinzner@rub.de} \thanks{The first two authors were
  partially supported by FIRB 2012 ``Geometria differenziale e teoria
  geometrica delle funzioni'', by a grant of the Max-Planck Institut
  f\"ur Mathematik, Bonn and by GNSAGA of INdAM. The second author was
  also supported by PRIN 2009 MIUR ``Moduli, strutture geometriche e
  loro applicazioni''. The third author was partially supported by
  DFG-priority program SPP 1388 (Darstellungstheorie)}
\maketitle

\normalem
% \section{Introduction}
The boundary of a compact convex set is the union of its faces. Among
the faces, the simplest ones are the exposed ones. They are given by
the intersection of the convex set with a supporting hyperplane.  In
\cite{biliotti-ghigi-heinzner-1-preprint, biliotti-ghigi-heinzner-2}
we studied the convex hull $\widehat{\mathcal O}$ of a $K$-orbit $\OO$
in $\liep$, where $\liep$ is given by the Cartan decomposition $\lieg
=\liek \oplus \liep$ of a reductive Lie algebra $\mathfrak g$ and $K$
acts on $\liep$ by the adjoint representation.
% Let $\lia $ be a maximal abelian subalgebra of $\liep$. Then $P=\lia
% \cap \c$ is a convex polytope. We proved in
% \cite{biliotti-ghigi-heinzner-2}, that up to the action of the Weyl
% group on $\lia$ and the adjoint action the faces of $P$ and of $\c$
% are in a one to one correspondence. Moreover every face of $\c$ is
% exposed and is the convex hull of an orbit of a subgroup of $K$ that
% is the centralizer of an abelian subalgebra $\lies \subset \liep$.
%
% By \cite[Prop. 6]{dadok-polar} any polar representation is orbit
% equivalent to the adjoint action of $K$ on $\liep$ for some
% reductive Lie group, so to understand the convex hulls of orbits in
% polar representations it is enough to $\widehat{\OO}$ as above.
%
%
In this paper we use the results of \cite{biliotti-ghigi-heinzner-2}
and show that a substantial part of them holds for any $K$--invariant
compact convex set $E$ of $\liep$.  More precisely we study the faces
of $E$.
% Our first result concern the faces of a $K$-invariant convex subset
% of $\liep$
We show in Proposition \ref{fgo} that for a face $F$ of $E$ there
exists a subalgebra $\lies\subset \liep$ such that $F$ is a subset of
$\liep^{\lies}=\{x\in \liep:\, [x,\lies]=0\}$ and $F$ is invariant
with respect to the action of $K^{\lies}=\{h\in K:\,
\mathrm{Ad}(h)(\lies)=\lies\}$, where $\mathrm{Ad}$ denotes the
adjoint representation.

If we fix a maximal abelian subalgebra $\lia \subset \liep$, then the
set $P=E \cap \lia$ is convex and invariant with respect to the action
of the normalizer $\mathcal{N}_{K} (\lia)=\{h\in K:\,
\mathrm{Ad}(h)(\lia)=\lia\}$ of $\lia$ in $K$.  The $\mathcal{N}_K
(\lia)$--action on $P$ induces an action on the set of faces of $P$.
Similarly $K$ acts on the set of faces of $E$. Denote these sets by
$\mathscr{F}(P)$ respectively by $\mathscr{F}(E)$.  If $\sigma $ is a
face of $P$, let $\sigma^\perp$ denote the orthogonal complement in
$\lia$ of the affine hull of $\sigma$ (see Section \ref{uno}).  Our
main result is
\begin{teo}
  \label{main1}
  The map $\facesp \ra \faces$, $\sigma \mapsto K^{\sigma^\perp}\cd
  \sigma$ is well--defined and
  % If $\sigma $ is a face of $P$, and $K^{\sigma^\perp}$ denotes the
  % centralizer of the perpendicular $\sigma^\perp$ of $\sigma$ in
  % $\lia$, then $K^{\sigma^\perp}\cd \sigma$ is a face of
  % $E$. Moreover the map $\sigma \mapsto K^{\sigma^\perp}\cd \sigma$
  induces a bijection between $\facesp/\nora$ and $\faces/K$.
\end{teo}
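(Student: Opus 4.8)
The plan is to construct the inverse of the proposed map by intersecting faces of $E$ with $\lia$, and to reduce everything to the case of exposed faces, where an explicit maximizer argument is available. First I would record the convexity input: for $u\in\lia$ one has $\max_E\sx\cd,u\xs=\max_{\polp}\sx\cd,u\xs$, because every $K$--orbit meets $\lia$ and, by Kostant's convexity theorem, the orthogonal projection of a $K$--orbit onto $\lia$ is the convex hull of a $\nora$--orbit contained in $\polp$. Consequently, for $u\in\lia$ the exposed face $E_u:=\{x\in E:\sx x,u\xs=\max_E\sx\cd,u\xs\}$ satisfies $E_u\cap\lia=\polp_u:=\{x\in\polp:\sx x,u\xs=\max_{\polp}\sx\cd,u\xs\}$. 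I would also note the elementary fact that if $F$ is a face of $E$ then $F\cap\lia$ is a face of $\polp$, and that every nonempty face of $E$ is $K$--conjugate to one meeting $\lia$ (conjugate a point of the face into $\lia$). This already produces a candidate map $\faces/K\ra\facesp/\nora$, $[F]\mapsto[F\cap\lia]$, and the whole theorem amounts to showing it is well defined and inverse to $\sigma\mapsto K^{\sigma^\perp}\cd\sigma$.

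The heart of the argument is the case of an exposed face $\sigma$ of $\polp$. I would choose $u$ in the relative interior of its normal cone $N(\sigma)=\{u\in\lia:\sigma\subseteq\polp_u\}$; since $N(\sigma)$ spans $\sigma^\perp$, a generic such $u$ satisfies $\liep^{u}=\liep^{\sigma^\perp}$ and $\polp_u=\sigma$. A differentiation of $t\mapsto\sx\Ad(\exp t\xi)y,u\xs$ at a maximum over the orbit shows $[y,u]=0$ for every $y\in E_u$; hence $E_u\subseteq\liep^{u}=\liep^{\sigma^\perp}$, in agreement with Proposition \ref{fgo}. Now $(Z_K(\sigma^\perp),\liep^{\sigma^\perp})$ is again a polar representation with the same maximal abelian subalgebra $\lia$, so every $Z_K(\sigma^\perp)$--orbit in $\liep^{\sigma^\perp}$ meets $\lia$; applying this to points of $E_u$ and using $E_u\cap\lia=\sigma$ yields $E_u=Z_K(\sigma^\perp)\cd\sigma$. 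Since $Z_K(\sigma^\perp)\subseteq K^{\sigma^\perp}$ and, by Proposition \ref{fgo}, $E_u$ is $K^{\sigma^\perp}$--invariant, I conclude $K^{\sigma^\perp}\cd\sigma=E_u$. This simultaneously shows that $\sigma\mapsto K^{\sigma^\perp}\cd\sigma$ is well defined (it is the exposed face $E_u$) and that it inverts $F\mapsto F\cap\lia$ on exposed faces.

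To reach arbitrary faces I would argue by induction on $\dim\liep$. A non-exposed face $F$ of $E$ is a proper face of some exposed face $E_u$, and by the previous paragraph $E_u=Z_K((\sigma')^\perp)\cd\sigma'$ lives in the strictly smaller polar representation $\liep^{(\sigma')^\perp}$, where $\sigma'=E_u\cap\lia$ is an exposed face of $\polp$. Applying the inductive hypothesis to the compact invariant convex set $E_u$ in $\liep^{(\sigma')^\perp}$ matches the $Z_K((\sigma')^\perp)$--orbits of faces of $E_u$ with the $\nora\cap Z_K((\sigma')^\perp)$--orbits of faces of $\sigma'=E_u\cap\lia$, and one checks that these orbit data assemble into the $K$-- and $\nora$--orbit data for $E$ and $\polp$. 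For injectivity modulo the groups I would use that if $k\cd(K^{\sigma^\perp}\cd\sigma)=K^{\tau^\perp}\cd\tau$ then $\lia$ and $\Ad(k)\lia$ are two maximal abelian subalgebras of the relevant centralizer meeting the relative interior of a common face, hence conjugate by an element of the centralizer in $K$ of that face; tracking this conjugation back into $\lia$ produces an element of $\nora$ carrying $\sigma$ to $\tau$. Surjectivity is immediate from the reduction in the first paragraph together with the identity $kF=K^{\sigma^\perp}\cd\sigma$ for $\sigma=kF\cap\lia$.

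The main obstacle is the passage from the Lie--theoretic orbit $K^{\sigma^\perp}\cd\sigma$ to the convex--geometric conclusion that it is exactly a \emph{face} of $E$, with the \emph{correct} stabilizing group. Concretely, one must reconcile three a priori different subgroups attached to $\sigma^\perp$ --- the pointwise centralizer $Z_K(\sigma^\perp)$, the stabilizer $K^{u}$ of a generic $u\in\relint N(\sigma)$, and the setwise normalizer $K^{\sigma^\perp}$ --- and show they all produce the same orbit of $\sigma$; this is what makes $K^{\sigma^\perp}\cd\sigma$ independent of the auxiliary choice of $u$ and forces the residual ambiguity in the $\lia$--slice to be exactly a single $\nora$--orbit. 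Controlling the non-exposed faces, where no single supporting functional is available and one must descend to a centralizer subrepresentation, is the other delicate point, and it is precisely there that Proposition \ref{fgo} is indispensable.
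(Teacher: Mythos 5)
Your overall strategy (reduce to exposed faces via an explicit maximizer, then induct and descend to centralizers) is reasonable, but two steps do not hold up as written. The most serious is the claim that the normal cone $N(\sigma)=\{u\in\lia:\sigma\subseteq\polp_u\}$ spans $\sigma^\perp$, so that a generic $u\in\relint N(\sigma)$ satisfies $\liep^{u}=\liep^{\sigma^\perp}$. This is true for polytopes but false for general convex bodies, and here $\polp$ is an arbitrary Weyl--invariant convex body: for an extreme point $\sigma$ of a disc in $\lia\cong\R^2$ one has $\sigma^\perp=\lia$ while $N(\sigma)$ is a single ray, which may lie on a wall, so that $\liep^{u}\supsetneq\liep^{\sigma^\perp}$. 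Without this genericity your argument only yields $E_u\subseteq\liep^{u}$ and hence, by the Gichev/Kostant step, $E_u=K^{u}\cd\sigma$ with $K^{u}$ possibly strictly larger than $K^{\sigma^\perp}$. The missing equality $K^{u}\cd\sigma=K^{\sigma^\perp}\cd\sigma$ is not a technicality: it is what makes the map well defined and independent of the auxiliary $u$, and you yourself list it as ``the main obstacle'' without resolving it. The paper closes exactly this gap in Lemma \ref{faccia-faccetta}, by intersecting $F$ with the orbitopes $\widehat{K\cd y}$, $y\in F$, and invoking Proposition 3.6 of \cite{biliotti-ghigi-heinzner-2}; some input of this kind is unavoidable.

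The second gap is injectivity. From $k\cd(K^{\sigma^\perp}\cd\sigma)=K^{\tau^\perp}\cd\tau$ you infer that $\lia$ and $\Ad(k)\lia$ are maximal abelian subalgebras of ``the relevant centralizer'' and conclude by conjugating one onto the other. But it is not clear that $\Ad(k)\lia\subseteq\liep^{\tau^\perp}$, and even granting a conjugation $h$ with $h\Ad(k)\lia=\lia$, nothing guarantees that $hk$ carries $\sigma$ to $\tau$: one needs an element that simultaneously normalizes $\lia$, maps $\Dir(\sigma)$ onto $\Dir(\tau)$ and $\sigma^\perp$ onto $\tau^\perp$, and still stabilizes the face. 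The paper manufactures such an element by splitting $\lieg^{\tau^\perp}$ into two commuting ideals adapted to the direction of the face (Proposition \ref{deco-face}) and correcting $k$ separately on each factor by elements acting trivially on the other; your sketch has no substitute for this. Your surjectivity step has a similar defect: conjugating a single point of $F$ into $\lia$ does not give $kF=K^{\sigma^\perp}\cd\sigma$ for $\sigma=kF\cap\lia$; one must conjugate so that $\lia$ contains the whole abelian subalgebra $\lies$ of Proposition \ref{fgo} attached to $F$.
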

An application of Theorem \ref{main1} is the following result.
\begin{teo}\label{main2}
  The faces of $E$ are exposed if and only if the faces of $P$ are
  exposed.
\end{teo}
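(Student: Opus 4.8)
The plan is to derive Theorem \ref{main2} from the bijection of Theorem \ref{main1} by comparing the two convex sets through a single linear functional. Recall that for $\beta\in\liep$ the exposed faces of $E$ are exactly the sets $E(\beta):=\{x\in E:\sx\beta,x\xs=\max_{y\in E}\sx\beta,y\xs\}$, and that the exposed faces of $P$ are the analogous sets $P(\beta)$ with $\beta\in\lia$. Since $K$ acts on $\liep$ by isometries preserving $E$, one has $\Ad(k)\,E(\beta)=E(\Ad(k)\beta)$, so whether a face of $E$ is exposed depends only on its $K$--orbit; the same holds for $P$ and the $\nora$--action. Hence exposedness descends to $\faces/K$ and $\facesp/\nora$, and it is enough to check that the bijection $\Psi$ induced by $\sigma\mapsto K^{\sigma^\perp}\cd\sigma$ carries exposed classes to exposed classes and back.

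First I would establish the identity
\[
E(\beta)\cap\lia=P(\beta)\qquad(\beta\in\lia).
\]
Every $K$--orbit in $\liep$ meets $\lia$, so every orbit contained in $E$ meets $P=E\cap\lia$; and by Kostant's theorem the maximum of the linear function $\sx\beta,\cdot\xs$ over such an orbit is attained at a point of $\lia$. Taking $x_0\in E$ where $\sx\beta,\cdot\xs$ attains its maximum on $E$ and passing to a point of $\Ad(K)x_0\cap\lia$ where this same maximum is realized shows $\max_E\sx\beta,\cdot\xs=\max_P\sx\beta,\cdot\xs$, whence intersecting $E(\beta)$ with $\lia$ yields exactly $P(\beta)$. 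The second input I need is that the inverse of $\Psi$ is induced by $F\mapsto F\cap\lia$: for every face $F$ of $E$ the slice $F\cap\lia$ is a face of $P$ whose $\nora$--class equals $\Psi^{-1}([F])$. I expect this to be available from the construction underlying Theorem \ref{main1}.

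Granting these two facts, the equivalence is formal. If a face $F$ of $E$ is exposed, then after conjugating its exposing vector into $\lia$ I may take $F=E(\beta)$ with $\beta\in\lia$ as a representative of its class; then $\Psi^{-1}([F])=[F\cap\lia]=[P(\beta)]$ is the class of an exposed face of $P$. Conversely, if $\sigma=P(\beta)$ is an exposed face of $P$, then $E(\beta)$ is an exposed face of $E$ with $E(\beta)\cap\lia=P(\beta)=\sigma$, so $\Psi([\sigma])=[E(\beta)]$ is exposed. As $\Psi$ is a bijection and every face lies in exactly one class, exposedness of a class is preserved in both directions, and therefore the statements ``all faces of $E$ are exposed'' and ``all faces of $P$ are exposed'' are equivalent, which is Theorem \ref{main2}.

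I expect the only real work to sit in the two structural inputs: the identity $E(\beta)\cap\lia=P(\beta)$, which rests on the orbit--meets--$\lia$ property together with the Kostant maximum principle, and the identification of $\Psi^{-1}$ with intersection by $\lia$, which must be read off from the proof of Theorem \ref{main1}. Once these are in hand, the conjugation--invariance of exposedness and the bijectivity of $\Psi$ reduce the whole theorem to the single--functional computation above.
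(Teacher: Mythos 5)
Your proposal is correct and follows essentially the same route as the paper: the bijection of Theorem \ref{main1}, the $K$--invariance of exposedness, and the equivalence ``$F$ is exposed iff $F\cap\lia$ is'' (the paper's Proposition \ref{faccia-faccetta2}), whose proof rests on exactly the two inputs you isolate, namely $F_\beta(E)\cap\lia=F_\beta(P)$ for $\beta\in\lia$ and the fact that intersecting with a suitably placed $\lia$ determines the face (Lemma \ref{faccia-faccetta}). One caution: your blanket claim that $\Psi^{-1}$ is induced by $F\mapsto F\cap\lia$ for \emph{every} face $F$ is false as literally stated (a face need not meet $\lia$ at all, e.g.\ an extreme point lying outside $\lia$); it holds only after conjugating $F$ so that $\lia$ contains the abelian subalgebra $\lies$ of Proposition \ref{fgo} -- which is what you in fact arrange when you conjugate the exposing vector into $\lia$ before applying it.
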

Interesting $K$--invariant compact subsets of $\liep$ often arise as
images of restricted momentum or gradient mappings.  More precisely,
let $U$ be a compact connected Lie group which acts by biholomorphism
and in a Hamiltonian fashion on a compact K\"ahler manifold $Z$ with
momentum map $\mu:Z \lra \liu$.
% Assume that the $U$-action on $Z$ extends to a holomorphic
% $U^\C$-action. Note that this is automatically the case if $Z$ is a
% compact manifold.
Let $G\subset U^\C$ be a connected Lie subgroup of $U^\C$ which is
\emph{compatible} with respect to the Cartan decomposition of
$U^\C$. This means that $G$ is a closed subgroup of $U^\C$ such that
$G=K\exp (\liep)$, where $K=U\cap G$ and $\liep=\lieg \cap i\liu$
\cite{heinzner-schwarz-stoetzel,heinzner-stoetzel-global}. Let
$X\subset Z$ be a $G$-invariant compact subset of $Z$. We have the
restricted momentum map or the gradient map $\mup:X \lra \liep$ in the
sense of \cite{heinzner-schwarz-stoetzel} (see also Section
\ref{gradient-map}) and we denote by $E=\widehat{\mup(X)}$ the convex
hull of the $K$-invariant set $\mup(X)$.  If $\lia$ is a maximal
abelian subalgebra of $\liep$ and $\pi$ is the orthogonal projection
onto $\lia$, then $\mua=\pi \circ \mup:X \lra \lia$ is the gradient
map with respect to $A=\exp (\lia)$.  Since $P=\cc\cap
\lia=\widehat{\mu_{\lia} (X)}$ is a convex polytope (Proposition
\ref{momentum-polytope}), we deduce the following.
\begin{teo}\label{momentum-polytope}
  All faces of $\widehat{\mup (X)}$ are exposed.
\end{teo}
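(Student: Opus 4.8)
The plan is to obtain this statement as a direct consequence of Theorem \ref{main2}, the only genuinely new input being the polytopal nature of the slice $P = E \cap \lia$. Here $E = \widehat{\mup(X)}$, and, as observed above, $\pi \circ \mup = \mua$ is the gradient map for $A = \exp(\lia)$, whence $P = E \cap \lia = \widehat{\mua(X)}$ is a convex polytope. Since Theorem \ref{main2} asserts that \emph{all} faces of $E$ are exposed if and only if all faces of $P$ are, I would reduce the statement to the purely finite-dimensional claim that every face of the polytope $P$ is exposed.

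To settle that claim I would invoke the classical fact that for a convex polytope the notions of face and exposed face coincide, and recall the one-line reason. Writing $P$ as a bounded intersection of finitely many closed half-spaces $\{x\in\lia : \langle a_i,x\rangle \le b_i\}$, $i=1,\dots,m$, a nonempty proper face $\sigma$ is the subset of $P$ on which a certain index set $I$ of these constraints is active, $\sigma = P \cap \bigcap_{i\in I}\{x : \langle a_i,x\rangle = b_i\}$. Setting $a = \sum_{i\in I} a_i$ and $b = \sum_{i\in I} b_i$, for every $x\in P$ one has $\langle a,x\rangle \le b$ with equality exactly when all the active constraints hold with equality; hence $\{x : \langle a,x\rangle = b\}$ is a supporting hyperplane of $P$ meeting it precisely in $\sigma$, so $\sigma$ is exposed. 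The improper faces $\emptyset$ and $P$ are exposed by convention, so every face of $P$ is exposed.

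Combining the two steps concludes the argument: all faces of the polytope $P$ are exposed, so by Theorem \ref{main2} all faces of $E = \widehat{\mup(X)}$ are exposed as well. I do not expect a serious obstacle in this final deduction, since the substantive work is already carried by Theorem \ref{main2} and by the earlier identification of $P$ with the momentum polytope $\widehat{\mua(X)}$. The one point deserving care is bookkeeping rather than depth: one must check that the notion of face appearing in Theorem \ref{main2} is the same convex-geometric notion used in the polytope fact above, so that ``all faces exposed'' transfers verbatim from $P$ to $E$.
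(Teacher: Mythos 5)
Your argument is essentially the paper's own proof: identify $E\cap\lia=\pi(E)=\widehat{\mua(X)}$ via Lemma \ref{gichev}, note that this is a convex polytope by Proposition \ref{momentum-polytope}, and apply Theorem \ref{main2} together with the standard fact (which you spell out and the paper leaves implicit) that every face of a polytope is exposed. The only detail the paper adds that you omit is the appeal to Remark \ref{compatible}, needed because Theorems \ref{main1} and \ref{main2} are proved for $G$ semisimple, whereas in the gradient-map setting $G$ is only a connected compatible (reductive) subgroup of $U^\C$.
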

A reformulation of Theorem \ref{momentum-polytope} is that the faces
of $\cc$ correspond to maxima of components of the gradient map. This
observation will be used to realize a close connection between the
faces of $E$ and parabolic subgroups of $G$.  More precisely, for any
face $F\subset \cc$ let $X_F := \mup\meno(F)$ and let $Q^F = \{g \in
G: g \cd X_F =X_F\}$. Then $X_F$ is the set of maximum points of an
appropriately chosen component of the gradient map and $Q^F$ is a
parabolic subgroup of $G$.

If $X$ is a $G$-stable compact submanifold of $Z$, then for any face
$F$, one can construct an open neighbourhood $X_F^{-}$ of $X_F$ in
$X$, which is an analogue of an open Bruhat cell.  Moreover there is a
smooth deformation retraction of $X_F^{-} $ onto $ X_F$.  See Theorem
\ref{urg} for more details.

\medskip

{\bfseries \noindent{Acknowledgements.}}  The first two authors are
grateful to the Fakul\-t\"at f\"ur Mathematik of Ruhr-Universit\"at
Bochum for the wonderful hospitality during several visits.  They also
wish to thank the Max-Planck Institut f\"ur Mathematik, Bonn for
excellent conditions provided during their visit at this institution,
where part of this paper was written.

\section{Group theoretical description of the faces}
\label{uno}
We start by recalling the basic definitions and results regarding
convex bodies. For more details see e.g.
\cite{schneider-convex-bodies}.  Let $V$ be a real vector space with
scalar product $\langle \cdot, \cdot \rangle$.  A \emph{convex body}
$E \subset V$ is a convex compact subset of $V$.  Let
$\mathrm{Aff}(E)$ denote the affine span of $E$.  The interior of $E$
in $\operatorname{Aff}(E)$ is called the \emph{relative interior} of
$E$ and is denoted by $\relint E$. By definition a \emph{face} of $E$
is a convex subset $F\subset E$ such that $x,y\in E$ and
$\relint[x,y]\cap F\neq \vacuo$ implies $[x,y]\subset F$. A face
distinct from $E$ and $\emptyset$ is called a \emph{proper face}.  The
\emph{extreme points} of $E$ are the points $x\in E$ such that $\{x\}$
is a face. We will denote by $\ext E$ the set of the extreme points of
$E$. The set $\ext E$ completely determines the convex body $E$ since
the convex hull of $\ext E$ coincides with $E$ and it is the smallest
subset of $E$ with this property.  If $F$ is a face of $E$, we denote
by $\operatorname{Dir}(F)$ the vector subspace of $V$ defined by
$\mathrm{Aff}(F)$, i.e.  $\mathrm{Aff}(F)=p+\mathrm{Dir}(F)$.  We call
$\operatorname{Dir}(F)$ the \emph{direction} of $F$.  Every vector
$\beta \in V$ defines an \emph{exposed face} $F=F_\beta (E)=\{x\in
E:\, \langle x, \beta \rangle= \max_{y\in E} \langle y , \beta \rangle
\}$ with $\mathrm{Dir}(F_\beta (E))\subset \{\beta \}^{\perpp}$. In
general not all faces of a convex set are exposed, see Fig. \ref{figP}
for an example.  For any exposed face $F$ the set
\begin{gather}
  \label{def-CF}
  C_F=\{\beta \in V:\, F=F_\beta (E)\},
\end{gather}
is a convex cone.
% In the sequel we will denote by $\widehat C$ the convex hull of a
% subset $C\subset V $.
The faces of $E$ are closed. If $F_1$ and $F_2$ are faces of $E$ and
they are distinct, then $\relint F_1 \cap \,\relint F_2
=\emptyset$. Moreover the convex body $E$ is the disjoint union of the
relative interiors of its faces (see
{\cite[p. 62]{schneider-convex-bodies}}).

We are interested in invariant convex bodies in polar
representations. A theorem of Dadok \cite{dadok-polar} asserts that we
can restrict ourselves to the following setting.

Let $\lieg$ be a semisimple Lie algebra with a Cartan involution
$\theta$ and let $B$ be the Killing form of $\lieg$. Then $\lieg=\liek
\oplus \liep$, is the eigenspace decomposition of $\lieg$ in $1$ and
$-1$ eigenspaces of $\theta$ and they are orthogonal under
$B$. Moreover, $B$ restricted to $\liek$, respectively $\liep$, is
negative definite, respectively positive definite. In the sequel we
denote $\langle \cdot,\cdot \rangle =B_{|_{\liep \times \liep}}$ which
is a $K$-invariant scalar product.  Out object of study will be a
$K$-stable convex body $E\subset \liep$. For for any $A,B\subset
\liep$ we set \begin{gather*}
  A^B :=     \{\eta\in A: [\eta, \xi ] =0, \mathrm{for\ all}\ \xi \in B\} \\
  G^B := \{g\in G: \Ad g (\xi ) = \xi,  \mathrm{for\ all}\ \xi \in B\}, \\
  K^B : =K\cap G^B.
\end{gather*}
where $\mathrm{Ad}$ denotes the adjoint representation. In the sequel
we denote by $k\cdot x=\mathrm{Ad}(k)(x)$ the action of $K$ on $\liep$
by linear isometries.

Faces of $K$--invariant convex bodies in $\liep$ are closely connected
to orbits of subgroups of $K$ which are given as centralizers. More
precisely for any nonzero $\beta$ in $\liep$ we have the Cartan
decomposition $\lieg^\beta=\liek^\beta \oplus \liep^\beta$ of the Lie
algebra of the centralizer $G^\beta$ of $\beta$ in $G$.
\begin{prop}\label{exposed-face}
  Let $F=F_\beta (E)$ be an exposed face of $E$. Then
  \begin{enumerate}
  \item $F\subset \liep^{\beta}$ and $F$ is $K^\beta$--stable;
  \item $\mathrm{Dir}(F)\subset \beta^{\perp}$, where $\perp$ is in
    $\liep$.
  \end{enumerate}
\end{prop}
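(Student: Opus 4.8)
The plan is to read part (2) off directly from the definition of the exposed face and to obtain part (1) from a first--order extremality argument at the maximizers of $\langle\cdot,\beta\rangle$. For part (2), if $x,y\in F=F_\beta(E)$ then both realize the maximum $m=\max_{z\in E}\langle z,\beta\rangle$, so $\langle x-y,\beta\rangle=0$; since $\Dir(F)$ is spanned by such differences, $\Dir(F)\subset\beta^{\perp}$ with $\perp$ taken in $\liep$. This is just the inclusion already noted for exposed faces in the general convex--body setting, specialized to $V=\liep$. The $K^\beta$--stability in part (1) is equally direct: for $k\in K^\beta$ one has $\Ad(k^{-1})\beta=\beta$, and since $K$ preserves $E$ and acts by isometries, $\langle k\cdot x,\beta\rangle=\langle x,\Ad(k^{-1})\beta\rangle=\langle x,\beta\rangle=m$ for $x\in F$, so $k\cdot x\in F$.

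The substance of the proposition is the inclusion $F\subset\liep^\beta$. Fix $x\in F$, so that $x$ is a global maximizer of $f=\langle\cdot,\beta\rangle$ on $E$. For each $\xi\in\liek$ the curve $t\mapsto\Ad(\exp t\xi)x$ lies in $E$, because $E$ is $K$--stable; hence $t\mapsto f(\Ad(\exp t\xi)x)$ has a maximum at $t=0$, and differentiating there yields $\langle[\xi,x],\beta\rangle=0$. I would then invoke the invariance of the Killing form to rewrite this as $B(\xi,[x,\beta])=0$, and observe that $[x,\beta]\in\liek$ since $x,\beta\in\liep$. As this holds for all $\xi\in\liek$ and $B$ is negative definite, hence nondegenerate, on $\liek$, it follows that $[x,\beta]=0$, i.e. $x\in\liep^\beta$.

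The one point that requires care is the manipulation of the Killing form: the extremality condition $\langle[\xi,x],\beta\rangle=0$ must be transported, via $B([\xi,x],\beta)=B(\xi,[x,\beta])$, into a pairing that takes place inside $\liek$, where $B$ is definite; pairing directly in $\liep$ would lose the information, since $[\xi,\beta]$ lies in $\liep$ and no definiteness is available there in the relevant direction. Once the identity is set up so that the free element $[x,\beta]$ sits in $\liek$, nondegeneracy forces it to vanish and the proof closes.
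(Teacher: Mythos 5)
Your proof is correct, and its core argument for part (1) is genuinely different in character from the one in the paper. The paper reduces to the orbitope case: for $x\in F_\beta(E)$ it observes that $\widehat{K\cdot x}\subset E$ and that $x$ still maximizes $\langle\cdot,\beta\rangle$ on $\widehat{K\cdot x}$, and then simply cites Corollary~3.1 of \cite{biliotti-ghigi-heinzner-2} to conclude $x\in\liep^\beta$; parts on $K^\beta$--stability and on $\mathrm{Dir}(F)\subset\beta^\perp$ are dispatched in one line each. You instead give a self-contained first--order criticality argument: differentiating $t\mapsto\langle \Ad(\exp t\xi)x,\beta\rangle$ at its maximum $t=0$ gives $B([\xi,x],\beta)=0$, and the invariance identity $B([\xi,x],\beta)=B(\xi,[x,\beta])$ together with $[x,\beta]\in\liek$ and the definiteness of $B$ on $\liek$ forces $[x,\beta]=0$. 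Your cautionary remark about where to place the bracket is exactly the right one: the conclusion comes from landing the free element in $\liek$, where $B$ is definite. What each approach buys: the paper's version is shorter and leverages machinery already established for polar orbitopes (and that cited corollary is, at bottom, proved by the same criticality computation you carry out); yours makes the proposition independent of the earlier paper and makes transparent that the only inputs are $K$--invariance of $E$, the Cartan relations $[\liek,\liep]\subset\liep$, $[\liep,\liep]\subset\liek$, and the invariance and signature of the Killing form. Your treatments of $K^\beta$--stability and of part (2) coincide with the paper's.
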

\begin{proof}
  If $x\in F_\beta (E)$, then $\widehat{K\cdot x} \subset E$ since $E$
  is $K$-invariant. Moreover, we have
  \begin{gather*}
    \max_{y\in E} \sx y, \beta \xs = \max_{y\in \widehat{K\cdot x} }
    \sx y, \beta\xs=\sx x,\beta \xs.
  \end{gather*}
  Corollary 3.1 in \cite{biliotti-ghigi-heinzner-2} implies $F_\beta
  (\widehat{K\cdot x})\subset \liep^{\beta}$. Therefore $x \in
  \liep^\beta$.  This proves a). Part b) follows since $F $ is
  contained in an affine hyperplane orthogonal to $\beta$.
\end{proof}
For an arbitrary face of $E$ we have the following.
\begin{prop}\label{fgo}
  Let $F\subset E$ be a face. Then there exists an abelian subalgebra
  $\lies \subset \liep$ such that
  \begin{enumerate}
  \item $F\subset \liep^{\lies}$ and $F$ is $K^{\lies}$--stable;
  \item $\mathrm{Dir}(F)\subset \lies^{\perpp}$;
  \end{enumerate}
\end{prop}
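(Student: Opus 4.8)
The plan is to reduce the statement to the already settled case of exposed faces, Proposition \ref{exposed-face}, by realizing an arbitrary face as the last term of a finite descending chain of exposed faces. The underlying convex-geometric fact I would use is elementary: if $C$ is a convex body and $F$ is a proper nonempty face of $C$, then $F$ is contained in an exposed face of $C$ of strictly smaller dimension, and $F$ is again a face of that exposed face. To see this, choose $x\in \relint F$; since $F$ is proper, $x$ lies on the relative boundary of $C$, so a supporting hyperplane at $x$ yields an exposed face $G=F_\beta(C)$ with $x\in G$. Because $G$ is a face and $x\in\relint F$, for any $y\in F$ there is $z\in F$ with $x\in\relint[y,z]$, whence $[y,z]\subset G$ and $y\in G$; thus $F\subset G$. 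As $G$ lies in a supporting hyperplane, $\dim G<\dim C$, and $F$, being a face of $C$ contained in $G$, is a face of $G$.

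Applying this repeatedly I would construct a chain $E=G_0\supset G_1\supset\cds\supset G_k=F$ with strictly decreasing dimensions, where $G_j=F_{\beta_j}(G_{j-1})$ is an exposed face of $G_{j-1}$ and $F$ is a face of each $G_j$; the process terminates for dimension reasons. The essential point is that each $G_{j-1}$ is again an instance of the present setting: by Proposition \ref{exposed-face} applied at the previous step, $G_{j-1}$ is $K^{\{\beta_1,\ldots,\beta_{j-1}\}}$--stable and contained in $\liep^{\{\beta_1,\ldots,\beta_{j-1}\}}$, and this last space is exactly the $\liep$--part of the Cartan decomposition $\lieg^{\{\beta_1,\ldots,\beta_{j-1}\}}=\liek^{\{\beta_1,\ldots,\beta_{j-1}\}}\oplus\liep^{\{\beta_1,\ldots,\beta_{j-1}\}}$ of the reductive centralizer, on which $K^{\{\beta_1,\ldots,\beta_{j-1}\}}$ acts. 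Hence I may invoke Proposition \ref{exposed-face} inside this smaller polar representation, choosing the exposing vector $\beta_j\in\liep^{\{\beta_1,\ldots,\beta_{j-1}\}}$. In particular each $\beta_j$ centralizes $\beta_1,\ldots,\beta_{j-1}$, so the $\beta_1,\ldots,\beta_k$ pairwise commute and $\lies:=\spam\{\beta_1,\ldots,\beta_k\}$ is an abelian subalgebra contained in $\liep$.

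It then remains to read off the conclusions. Since centralizing $\beta_1,\ldots,\beta_k$ is the same as centralizing $\lies$, the nesting $F\subset G_j\subset\liep^{\{\beta_1,\ldots,\beta_j\}}$ gives $F\subset\liep^{\lies}$, and $K^{\lies}=\bigcap_j K^{\beta_j}$. For the invariance I would examine the final step: $F=G_k$ is an exposed face of $G_{k-1}$, so Proposition \ref{exposed-face}, applied in $\lieg^{\{\beta_1,\ldots,\beta_{k-1}\}}$, shows that $F$ is stable under $(K^{\{\beta_1,\ldots,\beta_{k-1}\}})^{\beta_k}=K^{\{\beta_1,\ldots,\beta_k\}}=K^{\lies}$. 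Finally, from $F\subset G_j$ we get $\Dir(F)\subset\Dir(G_j)\subset\beta_j^{\perpp}$ for every $j$, hence $\Dir(F)\perp\lies$, that is $\Dir(F)\subset\lies^{\perpp}$. The case $F=E$ is covered by taking $\lies=\{0\}$.

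I expect the main obstacle to be the bookkeeping that legitimizes the recursion rather than any single hard estimate: one must verify that the centralizer $\lieg^{\{\beta_1,\ldots,\beta_{j-1}\}}$, together with its Cartan decomposition and the compact group $K^{\{\beta_1,\ldots,\beta_{j-1}\}}$, is genuinely a valid instance of the hypotheses of Proposition \ref{exposed-face} (equivalently of Corollary 3.1 in \cite{biliotti-ghigi-heinzner-2}) in the reductive generality, that the scalar product restricts compatibly so that orthogonality computed in the subalgebra agrees with orthogonality in $\liep$, and that the successive exposing vectors may indeed be chosen inside the shrinking $\liep$--parts so as to commute with one another.
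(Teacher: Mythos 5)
Your argument is correct and is essentially the proof in the paper: both reduce to Proposition \ref{exposed-face} by passing through a chain of successively exposed faces inside the shrinking centralizers $\liep^{\{\beta_1,\ldots,\beta_{j-1}\}}$ and taking $\lies$ to be the (abelian) span of the commuting exposing vectors, reading off (a) and (b) exactly as you do. The only difference is presentational: the paper cites its earlier lemma producing a maximal chain of faces $F=F_0\subsetneq\cdots\subsetneq F_k=E$ (in which each term is automatically exposed in the next) and runs the induction from $F$ upward, whereas you build the chain from $E$ downward and prove the required convex-geometry step inline.
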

\begin{proof}
  We may fix a maximal chain of faces $ F=F_0 \subsetneq F_1
  \subsetneq \cds \subsetneq F_k=E$ (see \cite[Lemma
  2]{biliotti-ghigi-heinzner-1-preprint}). If $k=0$, then $F=E$ and
  $\lies=\{0\}$. Assume the theorem is true for a face contained in a
  maximal chain of length $k$.  Then the claim is true for $F_1$ and
  consequently there exists $\lies_1 \subset \liep$ such that $F_1
  \subset \liep^{\mathfrak s_1}$, $F_1$ is $K^{\lies_1}$-stable and
  $\mathrm{Dir}(F_1 )\subset \lies_1^{\perpp}$. $F$ is an exposed face
  of $F_1$. Let $\beta'\in \liep^{\lies_1}$ such that
  $F=F_{\beta'}(F_1)$ and set $\lies:=\R \beta' \oplus \lies_1$. Then
  $F\subset \liep^{\lies}$, $F$ is
  $(K^{\lies_1})^{\beta'}=K^{\lies}$--stable and
  $\mathrm{Dir}(F)\subset \lies^{\perpp}$.
\end{proof}
Let $\lia\subset \liep$ be a maximal abelian subalgebra of $\liep$ and
let $\pi:\liep \lra \lia$ be the orthogonal projection onto
$\lia$. Then $P=E\cap \lia$ is a convex subset of $\lia$ which is
$\mathcal N_{K}(\lia )$--stable. The proof of the following Lemma is
given in \cite{gichev-polar}.
\begin{lemma}
  \label{gichev}
  (i) If $E \subset \liep$ is a $K$--invariant convex subset, then
  $E\cap \lia = \pi (E)$ and $K\cd \pi(E) =E$. (ii) If $C \subset
  \lia$ is a $\mathcal N_{K} (\lia)$-invariant convex subset, then $K
  \cdot C $ is convex and $\pi (K\cdot C )= C$.
\end{lemma}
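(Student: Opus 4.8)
The plan is to reduce the statement to two classical facts about the polar representation of $K$ on $\liep$: first, that every element of $\liep$ is $K$--conjugate to an element of $\lia$, i.e.\ $\liep = K\cd \lia$; and second, Kostant's linear convexity theorem, which asserts that for $a\in \lia$ one has $\pi(K\cd a) = \conv(\Weyl\cd a)$, where $\Weyl = \nora/K^{\lia}$ is the Weyl group acting on $\lia$ and $\conv$ denotes convex hull. Throughout I would use that each point of the orbit $\Weyl\cd a$ has the form $n\cd a$ with $n\in \nora$, so that $\Weyl\cd a \subset K\cd a$.

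For part (i) the inclusion $E\cap \lia \subset \pi(E)$ is immediate, since $\pi$ fixes $\lia$ pointwise. For the reverse inclusion I would take $x\in E$ and write $x = k\cd a$ with $k\in K$, $a\in \lia$; by $K$--invariance $a = k\meno\cd x\in E$, hence $a\in E\cap \lia$. Kostant's theorem gives $\pi(x)\in \pi(K\cd a) = \conv(\Weyl\cd a)$, and since $\Weyl\cd a \subset K\cd a \subset E$ with $E$ convex, the set $\conv(\Weyl\cd a)$ lies in $E\cap \lia$; thus $\pi(x)\in E\cap \lia$, proving $\pi(E) = E\cap \lia$. The equality $K\cd \pi(E) = E$ then follows formally: $K\cd \pi(E)\subset E$ because $\pi(E) = E\cap \lia \subset E$ is $K$--invariantly enlarged inside $E$, while conversely the decomposition $x = k\cd a$ with $a\in E\cap\lia$ exhibits $x\in K\cd \pi(E)$.

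For part (ii) the identity $\pi(K\cd C) = C$ is proved by the same device: $C\subset \pi(K\cd C)$ is clear, and for $x = k\cd c$ with $c\in C$ Kostant's theorem yields $\pi(x)\in \conv(\Weyl\cd c)\subset C$, using that $C$ is $\nora$--invariant and convex. To get the convexity of $K\cd C$ the idea is to bootstrap from part (i). I would set $E := \widehat{K\cd C}$, the convex hull of $K\cd C$; being the convex hull of a $K$--invariant set it is itself $K$--invariant, so part (i) applies and gives $K\cd \pi(E) = E$. Since $\pi$ is linear it commutes with the convex hull operation, whence $\pi(E) = \pi(\widehat{K\cd C}) = \widehat{\pi(K\cd C)} = \widehat{C} = C$, the last step because $C$ is already convex. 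Therefore $E = K\cd \pi(E) = K\cd C$, so $K\cd C = \widehat{K\cd C}$ is convex.

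The genuine content of the lemma sits entirely in Kostant's convexity theorem together with the conjugacy statement $\liep = K\cd \lia$; once these are in hand every remaining step is formal manipulation of convex sets under a linear map. Accordingly I expect the main obstacle to be establishing (or, more realistically, citing) Kostant's theorem. The only structurally nonobvious claim, the convexity of $K\cd C$, is not attacked directly but dissolves through the bootstrap above, whose crux is the twofold observation that the orthogonal projection $\pi$ intertwines the convex hull operation and that part (i) reconstructs a $K$--invariant convex set from its slice $E\cap\lia$.
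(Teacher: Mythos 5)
Your proof is correct. Note, however, that the paper itself gives no argument for this lemma: it is quoted with a pointer to Gichev's article \cite{gichev-polar}, which establishes it in the wider setting of arbitrary polar representations of compact groups. Your argument is the natural specialization of that proof to the Cartan-decomposition case: everything is reduced to the two standard facts $\liep = K\cdot\lia$ and Kostant's linear convexity theorem $\pi(K\cdot a)=\conv(\Weyl\cdot a)$, and each step checks out --- in particular the bootstrap for the convexity of $K\cdot C$, which correctly exploits that the linear map $\pi$ commutes with taking convex hulls and that part (i) recovers a $K$--invariant convex set from its slice $E\cap\lia$. The only point worth flagging is that the entire weight of the lemma rests on Kostant's theorem, which you cite rather than prove; that is entirely appropriate here, since the paper's own treatment likewise outsources the substance to the literature.
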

\begin{lemma}\label{closed}
  Let $U$ be a compact Lie group and let $\lieg\subset \liu^\C$ be a
  semisimple $\theta$-invariant subalgebra. Then any Lie subgroup with
  finitely many connected components and with Lie algebra $\lieg$ is
  closed and compatible.
\end{lemma}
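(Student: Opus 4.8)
The plan is to reduce compatibility to the global Cartan decomposition of $U^\C$ and to use semisimplicity only in order to guarantee closedness. Let $\theta$ denote the conjugation of $\liu^\C$ with respect to $\liu$ and let $\Theta$ be the associated Cartan involution of $U^\C$, so that $U=(U^\C)^\Theta$ and $U^\C=U\exp(i\liu)$. Since $\lieg$ is $\theta$-invariant it splits as $\lieg=\liek\oplus\liep$ with $\liek=\lieg\cap\liu$ and $\liep=\lieg\cap i\liu$, and this is a Cartan decomposition of the semisimple algebra $\lieg$; note that $\liep$ is precisely the subspace occurring in the definition of compatibility. I would first treat the identity component $G_0$, the connected subgroup with Lie algebra $\lieg$, and only afterwards pass to a group with finitely many components.

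For the closedness of $G_0$ I would invoke that a semisimple subalgebra of a reductive Lie algebra over a field of characteristic zero is algebraic (Chevalley): regarding $U^\C$ as a real linear algebraic group, $\lieg$ is the Lie algebra of a real algebraic subgroup $H$, and $G_0=H^\circ$ is closed since algebraic subgroups and their identity components are closed. The semisimplicity of $\lieg$ is indispensable here: for a general subalgebra the connected subgroup may wind densely and fail to be closed, already for a one-dimensional non-closed subgroup of a torus.

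Once $G_0$ is known to be closed, compatibility follows from the Cartan decomposition of $\Theta$-invariant subgroups. Indeed $G_0$ is connected and $\theta(\lieg)=\lieg$, so $\Theta(G_0)=G_0$; thus $G_0$ is a closed $\Theta$-invariant subgroup, and by Mostow's theorem the map $(G_0\cap U)\times\liep\to G_0$, $(k,\xi)\mapsto k\exp(\xi)$, is a diffeomorphism. In other words $G_0=K_0\exp(\liep)$ with $K_0=G_0\cap U$, which is exactly compatibility.

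It remains to pass to a subgroup $G$ with finitely many components and $\operatorname{Lie}(G)=\lieg$. Closedness is clear, since $G$ is a finite union of translates $g_iG_0$ of the closed subgroup $G_0$. For compatibility one wants $\Theta(G)=G$, after which Mostow's theorem applies verbatim; I expect this to be the main obstacle, amounting to showing that every connected component of $G$ meets $U$. The natural argument is that each $g_i$ normalizes $\lieg$ and hence lies in $N=N_{U^\C}(\lieg)$, which is algebraic and $\Theta$-invariant, therefore closed and compatible, so that every component of $N$ already meets $N\cap U$. After replacing $g_i$ by a suitable element of the same coset one may assume $g_i$ normalizes $K_0$; writing $g_i=u\exp(\xi)$ in the decomposition of $N$ and using the semisimplicity of $\lieg$, one then has to push the representative back into $U$. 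The delicate point is the control of the central directions lying in $\liez_{\liu^\C}(\lieg)\cap i\liu$, and it is here that the finiteness of $\pi_0(G)$ must be brought to bear.
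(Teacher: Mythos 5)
Your treatment of the identity component is complete and correct, and it runs parallel to the paper's: for closedness the paper cites the argument in Knapp (p.~440) for linear groups with semisimple Lie algebra, where you invoke Chevalley's algebraicity of semisimple subalgebras --- either works --- and for compatibility both arguments come down to restricting the global Cartan decomposition of $U^\C$ to a closed $\Theta$-stable subgroup. The genuine gap is exactly where you locate it: for disconnected $G$ you never establish $\Theta(G)=G$, and in fact this step \emph{cannot} be completed, because the statement is false for disconnected groups. Take $U=\SU(4)$, identify $\C^4=\C^2\otimes\C^2$, and let $\lieg=\mathfrak{sl}(2,\C)\otimes I_2$, a $\theta$-stable semisimple subalgebra whose connected group is $G_0=\Sl(2,\C)\otimes I_2$. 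Pick $c=\left(\begin{smallmatrix} i & -2i\\ 0 & -i\end{smallmatrix}\right)\in\Sl(2,\C)$, so that $c^2=-I_2$ while $c^*c$ is not scalar, and set $g=I_2\otimes c$. Then $g$ centralizes $G_0$ and $g^2=-I_4=(-I_2)\otimes I_2\in G_0$, so $G=G_0\cup gG_0$ is a closed Lie subgroup with two components and Lie algebra $\lieg$; but $gG_0$ contains no unitary element (an element $A\otimes c$ is unitary only if $c^*c$ is scalar), whence $G\cap U=\SU(2)\otimes I_2$ and $(G\cap U)\exp(\liep)=G_0\subsetneq G$. So $G$ is closed but not compatible, and $\Theta(G)\neq G$. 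This is precisely the phenomenon you feared: the offending coset representative lies in $Z_{U^\C}(\lieg)$ and its positive part sits in $\liez_{\liu^\C}(\lieg)\cap i\liu$; finiteness of $\pi_0(G)$ does not eliminate it.

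You should also be aware that the paper's own proof passes over this same point: the sentence ``since $\lieg$ is $\theta$-invariant, also $G$ is'' is valid only when $G$ is connected (or under the extra hypothesis that every component of $G$ meets $U$). This does not affect anything downstream, since the lemma is only ever applied to connected subgroups (to the group $B$ in the proof of Proposition~\ref{deco-face}); the correct statement, which both your argument and the paper's do establish, is the one for connected $G$, or for $G$ with finitely many components together with the additional assumption $\Theta(G)=G$.
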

\begin{proof}
  We fix an embedding $U \hookrightarrow\operatorname{U}(n)$ such that
  the Cartan involution $X \mapsto (X\meno)^*$ of $\Gl(n, \C)$
  restricts to $\theta$.  Then $G$ is closed in $\Gl(n, \C)$ (see
  \cite[p. 440]{knapp-beyond} for a proof) and hence also in $U^\C$.
  Since $\lieg$ is $\theta$-invariant, also $G$ is, and $\theta$
  restricts to the Cartan involution of $G$. This shows that $G$ is
  compatible.
\end{proof}
If $G \subset U^\C$ is compatible with Lie algebra $\lieg=\liek \oplus
\liep$, then $\lieg$ is real reductive and there is a nondegenerate
$K$--invariant bilinear form $B:\lieg \times \lieg \lra \R$ which is
positive definite on $\liep$, negative definite on $\liek$ and such
that $B(\liek,\liep)=0$. Indeed, fix a $U$-invariant inner product
$\scalo$ on $\liu$. Let $\scalo$ denote also the inner product on
$i\liu$ such that multiplication by $i$ be an isometry of $\liu$ onto
$i\liu$.  Define $B$ on $\liu^{\C}$ imposing $B(\liu, i \liu)=0$, $B=
-\scalo$ on $\liu$ and $B= \scalo$ on $i\liu$.  Therefore $B$ is
$\mathrm{Ad}\, U^\C$--invariant and non-degenerate and its restriction
to $\lieg$ satisfies the above conditions.

Let $\mathfrak q$ be a $K$--invariant subspace of $\liep$. Then
$[\mathfrak q,\mathfrak q]$ is a $K$--invariant linear subspace of
$\liek$ and therefore an ideal of $\liek$. Since $K$ is compact, we
have the following $K$--invariant splitting $\liek=[\mathfrak
q,\mathfrak q]\oplus \liek'.$ In particular $\liek'$ is an ideal of
$\liek$ commuting with $[\mathfrak q,\mathfrak q]$. Let $\liep
=\mathfrak q \oplus \mathfrak q'$ be a $K$--invariant splitting of
$\liep$. Since
\[
B( [\mathfrak q, \mathfrak q'],\liek ) = B( \mathfrak q , [\mathfrak
k,\mathfrak q'] ) \subset B( \mathfrak q, \mathfrak q' ) =0,
\]
this shows that $[\mathfrak q,\mathfrak q']=0$ and so $[\mathfrak q',
[\mathfrak q , \mathfrak q]] = [\mathfrak q, [\mathfrak q,\mathfrak
q']]=0$.  Moreover $\liep=\mathfrak q \oplus \mathfrak q'$ implies
that $\mathfrak h =[\mathfrak q,\mathfrak q] \oplus \mathfrak q$ and
$\mathfrak h'=\liek'\oplus \mathfrak q'$ are compatible $K$--invariant
commuting ideal of $\lieg$.

If a $K$--invariant linear subspace $\lieq \subset \liep$ is fixed,
one gets decomposition of $\lieg$, and so of $G$. This is
decomposition is the content of the next Proposition. We will need it
in the case where $F\subset \liep$ is a $K$--invariant convex body and
$\lieq$ is such that $\operatorname{Aff}(F) =x_0 +\mathfrak q$.
% $\lieq$ si determined by the confitio, that will be used in Lemma
% \ref{faccia-faccetta} and in the proof of Theorem \ref{main1}. This
% decomposition is the Let $F\subset \liep$ be a $K$--invariant convex
% body.  Then $\mathrm{Aff}(F)$ is a $K$--invariant affine
% subspace. We can write $\operatorname{Aff}(F) =x_0 +\mathfrak q$ for
% some linear subspace $\mathfrak q $ of $\liep$.  Since $\mathfrak
% q=\{x-y:\, x,y\in \mathrm{Aff}(F)\}$, also $\mathfrak q$ is
% $K$--invariant.  In this setting there is
\begin{prop}\label{deco-face}
  Let $G \subset U^\C$ be a compatible subgroup with Lie algebra
  $\lieg = \liek \oplus \liep$ and let $\mathfrak q \subset \liep$ be
  a linear $K$--invariant subspace. Let $\lieg =\mathfrak h \oplus
  \mathfrak h'$ where $\mathfrak h=[\mathfrak q,\mathfrak q] \oplus
  \mathfrak q$ and $\mathfrak h'=\mathfrak h^{\perp_B}$. Then the
  following hold.
  \begin{enumerate}
  \item $\mathfrak h$ and $\mathfrak h'$ are compatible $K$--invariant
    commuting ideal of $\lieg$;
  \item Let $K_1$ be the connected Lie subgroup of $G$ with Lie
    algebra $ \liek \cap [\mathfrak h,\mathfrak h]$. Then $K_1 \exp
    (\mathfrak q)$ is a connected compatible subgroup of $G$ and any
    two maximal subalgebras of $\mathfrak q$ are congiugate by an
    element of $K_1$.
    % Moreover $\Ad K_1$ acts trivially on $\lieg_2$.
  \item Let $K_2$ be the connected Lie subgroup of $G$ with Lie
    algebra $ \liek \cap [\mathfrak h' , \mathfrak h']$.  Then any two
    maximal subalgebras of $\mathfrak q'$ are congiugate by an element
    of $K_2$. %Moreover $\Ad K_2$ acts trivially on $\lieg_1$.
  \end{enumerate}
\end{prop}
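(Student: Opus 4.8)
The plan is to read off everything from the structure of the reductive ideals $\mathfrak h$ and $\mathfrak h'$, combining the computation that immediately precedes the statement with Lemma \ref{closed} and the classical conjugacy theorem for maximal abelian subalgebras of a $\liep$--part.

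For (a), I would first note that, since $B$ is negative definite on $\liek$, positive definite on $\liep$, and $B(\liek,\liep)=0$, the form $B$ is definite on each summand and the $B$--orthogonal complement respects the decomposition $\lieg=\liek\oplus\liep$; hence $\mathfrak h^{\perp_B}=\liek'\oplus\mathfrak q'$, exactly the algebra $\mathfrak h'$ appearing in the paragraph before the statement. That paragraph already establishes $[\mathfrak q,\mathfrak q']=0$, so $\mathfrak h$ and $\mathfrak h'$ commute, and that both are $K$--invariant ideals of $\lieg$. Since $[\mathfrak q,\mathfrak q]\subset\liek$ and $\mathfrak q\subset\liep$, the ideal $\mathfrak h=[\mathfrak q,\mathfrak q]\oplus\mathfrak q$ is $\theta$--invariant, and likewise $\mathfrak h'$; compatibility of the associated connected subgroups then follows from Lemma \ref{closed} applied to the semisimple parts $[\mathfrak h,\mathfrak h]$ and $[\mathfrak h',\mathfrak h']$, together with the observation that the central parts lie in $\liep$ and exponentiate to closed vector subgroups of $\exp(\liep)$.

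For (b), the decisive identity is $\liek\cap[\mathfrak h,\mathfrak h]=[\mathfrak q,\mathfrak q]$: one inclusion holds because $\mathfrak q\subset\mathfrak h$ forces $[\mathfrak q,\mathfrak q]\subset[\mathfrak h,\mathfrak h]$, the other because $\mathfrak h\cap\liek=[\mathfrak q,\mathfrak q]$. Thus $K_1$ is the connected maximal compact subgroup of the connected subgroup $H_1\subset G$ with semisimple Lie algebra $[\mathfrak h,\mathfrak h]$, and $H_1=K_1\exp(\mathfrak q\cap[\mathfrak h,\mathfrak h])$ is compatible by Lemma \ref{closed}. The center $\mathfrak z(\mathfrak h)$ is contained in $\mathfrak h\cap\liep=\mathfrak q$, so multiplying by the closed central vector group $\exp(\mathfrak z(\mathfrak h))$ yields $K_1\exp(\mathfrak q)=H_1\exp(\mathfrak z(\mathfrak h))$, which is precisely the connected compatible subgroup of $G$ integrating $\mathfrak h$. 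Finally, $\mathfrak q$ is the $\liep$--part of the Cartan decomposition of the reductive compatible algebra $\mathfrak h$; every maximal abelian subalgebra of $\mathfrak q$ is the sum of $\mathfrak z(\mathfrak h)$ and a maximal abelian subalgebra of $\mathfrak q\cap[\mathfrak h,\mathfrak h]$, and the classical conjugacy theorem (see e.g. \cite{knapp-beyond}) applied to $[\mathfrak h,\mathfrak h]$ makes the latter $\Ad(K_1)$--conjugate while fixing $\mathfrak z(\mathfrak h)$. This gives the asserted conjugacy.

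Part (c) is entirely parallel, with $\mathfrak h,\mathfrak q,K_1$ replaced by $\mathfrak h',\mathfrak q',K_2$ and $K_2$ integrating $\liek\cap[\mathfrak h',\mathfrak h']$. The only genuine obstacle is the reductive bookkeeping in (b): one must check that $\mathfrak z(\mathfrak h)$ sits inside $\mathfrak q$ so that $K_1\exp(\mathfrak q)$ really integrates $\mathfrak h$, and that the conjugacy for the possibly non--semisimple $\mathfrak h$ reduces cleanly to the semisimple $[\mathfrak h,\mathfrak h]$, where Knapp's theorem applies.
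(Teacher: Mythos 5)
Your proposal is correct and follows essentially the same route as the paper: decompose $\mathfrak h$ into its center and its semisimple part $[\mathfrak h,\mathfrak h]$, apply Lemma \ref{closed} to the semisimple part, adjoin the central vector group in $\liep$ to get a connected compatible (hence reductive) subgroup, and quote the classical conjugacy theorem from \cite{knapp-beyond}. The only detail you take for granted is that $\liez(\mathfrak h)\subset\liep$ (so that $\liez(\mathfrak h)\subset\mathfrak q$); this is true --- an element of $\liez(\mathfrak h)\cap\liek\subset[\mathfrak q,\mathfrak q]$ annihilates $\mathfrak q$, hence is $B$-orthogonal to $[\mathfrak q,\mathfrak q]$ and therefore zero by definiteness of $B$ on $\liek$ --- but the paper sidesteps the issue by working only with $\liez(\mathfrak h)\cap\liep$.
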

\begin{proof}
  We have proved (a) in the above discussion. Let $\lieb : =[\mathfrak
  h, \mathfrak h]$. Then $\mathfrak h = \liez(\mathfrak h) \oplus
  \lieb$ and $\lieb$ is semisimple.  Denote by $B$ the connected
  subgroup of $U^\C$ with Lie algebra $\lieb$.  By Lemma \ref{closed}
  $B$ is a closed subgroup of $U^\C$. Set $\liez_{\liep} :=
  \liez(\mathfrak h) \cap \liep$ and $\lied: = \lieb \oplus
  \lia$. Then $\lied$ is a reductive Lie algebra and $\exp \lia $ is a
  compatible abelian subgroup commuting with $B$. Thus $D:=B \cd \exp
  \lia$ is a connected closed subgroup with Lie algebra
  $\lied$. Moreover $D \cap U = B\cap U$ and $\exp (\lieb \cap \liep )
  \cd \exp \lia = \exp (\lieb \cap \liep \oplus \lia) = \exp (\lied
  \cap \liep)$. This shows that $D$ is compatible. Since $D\cap U$
  coincides with $K_1$ and $D$ is connected the last statement in (b)
  follows from standard properties of compatible subgroups (see e.g.
  Prop. 7.29 in \cite{knapp-beyond}; note that a connected compatible
  subgroup is a reductive group in the sense of
  \cite[p. 446]{knapp-beyond}).  This proves (b).  For (c) the same
  argument applies more directly. It is enough to observe that the
  connected Lie subgroup $H'' \subset G$ with Lie algebra $[\mathfrak
  h',\mathfrak h']$ is semisimple, compatible and connected and that
  $K_2 = H''\cap U$.
\end{proof}
\begin{remark}\label{rem1}
  The compatible subgroup $G$ in the previous Proposition is not
  assumed to be connected. Nevertheless the constructions in (b) and
  (c) depend only on $G^0$.  Thus considering $G^0$ in place of $G$
  makes no difference.
\end{remark}
\begin{lemma}\label{spezza-abeliana}
  Let $\lieg = \lieg_1 \oplus \lieg_2$ be a reductive Lie algebra and
  $\lieg_i$ ideals. If $\lia \subset \liep$ is a maximal subalgebra,
  then $\lia_i := \lia \cap \liep_i $ is a maximal subalgebra of
  $\liep_i$ and $\lia = \lia_1 \oplus \lia_2$.
\end{lemma}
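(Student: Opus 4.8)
The plan is to deduce the statement from two facts: that the ideal splitting is compatible with the Cartan decomposition, and that $\lia$ is \emph{maximal} abelian. First I would note that the two ideals commute, $[\lieg_1,\lieg_2]\subseteq \lieg_1\cap\lieg_2=0$, and that each $\lieg_i$ is $\theta$-invariant (it is a compatible ideal in the sense of Proposition \ref{deco-face}). Consequently the Cartan decomposition splits, $\liek=\liek_1\oplus\liek_2$ and $\liep=\liep_1\oplus\liep_2$ with $\liep_i=\liep\cap\lieg_i$ and $\liek_i=\liek\cap\lieg_i$, and the summands commute: $[\liep_1,\liep_2]=0$ while $[\liep_i,\liep_i]\subseteq\liek_i$.

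Next I would prove the splitting $\lia=\lia_1\oplus\lia_2$. Let $\pi_i\colon\liep\to\liep_i$ be the projection. Given $x,y\in\lia$, write $x=\pi_1 x+\pi_2 x$ and $y=\pi_1 y+\pi_2 y$; since the cross brackets vanish and $[\pi_i x,\pi_i y]\in\liek_i$, the vanishing $[x,y]=0$ together with $\liek=\liek_1\oplus\liek_2$ forces $[\pi_i x,\pi_i y]=0$. Hence $\pi_i(\lia)$ is an abelian subalgebra of $\liep_i$, and, because $[\liep_1,\liep_2]=0$, the subspace $\pi_1(\lia)\oplus\pi_2(\lia)$ is an abelian subalgebra of $\liep$ containing $\lia$. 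Maximality of $\lia$ gives $\lia=\pi_1(\lia)\oplus\pi_2(\lia)$; as the two summands lie in $\liep_1$ and $\liep_2$, this identifies them with $\lia\cap\liep_1=\lia_1$ and $\lia\cap\liep_2=\lia_2$, so $\lia=\lia_1\oplus\lia_2$.

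Finally I would check that each $\lia_i$ is maximal abelian in $\liep_i$. If $\mathfrak b_1\subseteq\liep_1$ is abelian with $\lia_1\subseteq\mathfrak b_1$, then $\mathfrak b_1\oplus\lia_2$ is abelian (again using $[\liep_1,\liep_2]=0$) and contains $\lia$; by maximality of $\lia$ it equals $\lia=\lia_1\oplus\lia_2$, and comparing the $\liep_1$-parts gives $\mathfrak b_1=\lia_1$. The symmetric argument handles $\lia_2$.

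The only point requiring care is the first step: one needs the ideals $\lieg_i$ to be $\theta$-invariant, for otherwise $\liep$ need not split as $\liep_1\oplus\liep_2$ and the statement can fail (already for an abelian $\lieg$ whose centre meets both $\liek$ and $\liep$). In the situation at hand this invariance is guaranteed, and once it is in place the rest is a direct consequence of the commutativity $[\liep_1,\liep_2]=0$ and of the maximality of $\lia$; there is no serious obstacle.
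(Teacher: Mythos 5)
Your proof is correct: the paper states this lemma without any proof, and your argument---splitting the Cartan decomposition along the $\theta$-stable ideals, showing that $\pi_1(\lia)\oplus\pi_2(\lia)$ is abelian and invoking the maximality of $\lia$ twice---is exactly the standard one the authors evidently had in mind. Your closing caveat is also well placed: the statement does need the ideals $\lieg_i$ to be $\theta$-invariant (automatic when $\lieg$ is semisimple, and true by construction in the paper's application to $\mathfrak h \oplus \mathfrak h'$ in Lemma \ref{faccia-faccetta}), since otherwise $\liep$ need not decompose as $\liep_1\oplus\liep_2$.
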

If $\sigma $ is a face of $P$, let $\sigma^\perp$ denote the
orthogonal (inside $\lia$) to the direction of the affine hull of
$\sigma$.
\begin{lemma}\label{faccia-faccetta}
  Let $F$ be a face and let $\lies$ be as in Proposition \ref{fgo}.
  Let $\lia \subset \liep$ be a maximal abelian subalgebra containing
  $\mathfrak s$.  Set $\sigma : = \pi(F)$. Then $\sigma$ is a face of
  $P$, $\sigma=F\cap \lia$ and $F=K^{\sigma^{\perp} } \cdot
  \sigma$. Moreover $F$ is a proper face if and only if $F\cap \lia$
  is.
\end{lemma}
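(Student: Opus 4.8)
The plan is to pass to the centralizer of $\lies$, read off $\sigma$ and an orbit description of $F$ there, and then refine the stabilizing subalgebra from $\lies$ to $\sigma^\perp$ by means of the splitting of Proposition \ref{deco-face}.

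First I would work inside $\lieg^{\lies} = \liek^{\lies}\oplus\liep^{\lies}$, which is a compatible reductive subalgebra with compact part $K^{\lies}$. Since $\lies\subseteq\lia$ and $\lia$ is maximal abelian in $\liep$, the subalgebra $\lia$ is also maximal abelian in $\liep^{\lies}$, and the restriction of $\pi$ to $\liep^{\lies}$ is the orthogonal projection onto $\lia$ inside $\liep^{\lies}$. By Proposition \ref{fgo} the face $F$ is a $K^{\lies}$-invariant convex body contained in $\liep^{\lies}$, so Lemma \ref{gichev} applied in $\lieg^{\lies}$ gives $\sigma = \pi(F) = F\cap\lia$ and $F = K^{\lies}\cdot\sigma$. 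That $\sigma$ is a face of $P$ is then formal: $\sigma = F\cap\lia = F\cap P$, and the intersection of a face $F$ of $E$ with the convex subset $P\subseteq E$ is always a face of $P$ (if $x,y\in P$ and $\relint[x,y]$ meets $F\cap P$, then $[x,y]\subseteq F$ because $F$ is a face of $E$, and $[x,y]\subseteq P$ by convexity of $P$).

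The heart of the argument is to replace $K^{\lies}$ by $K^{\sigma^\perp}$. I would apply Proposition \ref{deco-face} inside $\lieg^{\lies}$ to the $K^{\lies}$-invariant subspace $\lieq := \Dir(F)$, obtaining commuting ideals $\lieg^{\lies} = \lieh\oplus\lieh'$ with $\lieh = [\lieq,\lieq]\oplus\lieq$, so that $\lieh\cap\liep = \lieq$. By Lemma \ref{spezza-abeliana} this splits $\lia = \lia_1\oplus\lia_2$ with $\lia_1 = \lia\cap\lieq$. Since $\lieq\perp\lia_2$ (the two ideals are $B$-orthogonal) one gets $\Dir(\sigma) = \pi(\lieq)\subseteq\lia_1$, while $\lia_1 = \lia\cap\lieq$ projects identically into $\pi(\lieq)$; hence $\Dir(\sigma) = \lia_1$ and therefore $\sigma^\perp = \lia_2$. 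Now $\lia_2$ lies in the ideal $\lieh'$ and so commutes with $\lieh\supseteq\lieq = \Dir(F)$, and it commutes with a base point $x_0\in\sigma\subseteq\lia$; thus $F\subseteq x_0 + \Dir(F)\subseteq\liep^{\sigma^\perp}$. Because $\lies\subseteq\sigma^\perp$ we also have $K^{\sigma^\perp}\subseteq K^{\lies}$, so $F$ remains $K^{\sigma^\perp}$-invariant; applying Lemma \ref{gichev} once more, this time in $\lieg^{\sigma^\perp}$, yields $F = K^{\sigma^\perp}\cdot(F\cap\lia) = K^{\sigma^\perp}\cdot\sigma$. I expect the identification $\sigma^\perp = \lia_2$, that is, the coincidence $\liep\cap\lieh = \Dir(F)$, to be the main point to pin down.

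For the last statement, one direction is immediate: if $\sigma$ is proper then $\sigma\neq P$, so $F\neq E$ (otherwise $\sigma=\pi(E)=P$), and $F\neq\emptyset$ since $\sigma=\pi(F)$; hence $F$ is proper. Conversely, it suffices to show that $\sigma = P$ forces $F = E$. If $\sigma = P$ then $\Dir(P) = \Dir(\sigma) = \lia_1$, so running the same splitting for the whole body $E$ (with $\lies = \{0\}$) produces the same complement $\lia_2 = \sigma^\perp$; arguing exactly as above, $E\subseteq x_0 + \Dir(E)\subseteq\liep^{\sigma^\perp}$, and Lemma \ref{gichev} in $\lieg^{\sigma^\perp}$ gives $E = K^{\sigma^\perp}\cdot(E\cap\lia) = K^{\sigma^\perp}\cdot P = K^{\sigma^\perp}\cdot\sigma = F$. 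Combining the two directions, $F$ is proper if and only if $\sigma = F\cap\lia$ is.
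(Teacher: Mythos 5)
Your argument is correct, but the core step is handled by a genuinely different mechanism from the paper's. For $F=K^{\sigma^\perp}\cdot\sigma$ the paper proves the inclusion $K^{\sigma^\perp}\cdot\sigma\subset F$ from $\lies\subset\sigma^\perp$ and then obtains the reverse inclusion by intersecting $F$ with the orbitopes $\widehat{K\cdot y}$, $y\in F$, and quoting Proposition 3.6 of \cite{biliotti-ghigi-heinzner-2}, i.e.\ the full face description of polar orbitopes. You instead show directly that $F\subset\liep^{\sigma^\perp}$: Proposition \ref{deco-face} applied in $\lieg^{\lies}$ to $\lieq=\Dir(F)$, together with Lemma \ref{spezza-abeliana}, identifies $\Dir(\sigma)=\lia\cap\lieq$ and hence $\sigma^{\perp}=\lia\cap\lieq'$, which sits in the ideal commuting with $\lieq$; choosing the base point of $\mathrm{Aff}(F)$ in $\sigma\subset\lia$ then places all of $\mathrm{Aff}(F)$ inside $\liep^{\sigma^\perp}$, and a single application of Lemma \ref{gichev} in $\lieg^{\sigma^\perp}$ yields both inclusions at once. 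This is self-contained within the tools of the present paper --- it bypasses the external Proposition 3.6 --- and it anticipates exactly the technique used in the proof of Theorem \ref{main1}, where Proposition \ref{deco-face} is applied to $G^{\sigma_2^\perp}$ and $\lieq_{F_2}$. Your elementary check that $\sigma=F\cap P$ is a face of $P$ (a face of $E$ meets any convex subset of $E$ in a face of that subset) likewise replaces the citation of \cite{biliotti-ghigi-heinzner-1-preprint}, and your properness argument coincides with the paper's. Two small points you should still write out: the inclusion $\lies\subset\sigma^{\perp}$, which follows from $\Dir(\sigma)=\pi(\Dir(F))\subset\lies^{\perp}$ and $\lies\subset\lia$, and the observation that $\lia$ remains maximal abelian in $\liep^{\lies}$ and in $\liep^{\sigma^{\perp}}$, so that Lemmas \ref{gichev} and \ref{spezza-abeliana} do apply in those centralizers; by contrast the identity $\lieh\cap\liep^{\lies}=\Dir(F)$ that you flag as the point needing care holds by the very construction $\lieh=[\lieq,\lieq]\oplus\lieq$, so nothing remains to be pinned down there.
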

\begin{proof}
  By Proposition \ref{fgo} $F\subset \liep^\lies$ is a
  $K^\lies$--stable convex set. By Lemma \ref{gichev} we get
  $\sigma=\pi(F)=F\cap \lia$ and this is a face $P$ by \cite[Lemma
  11]{biliotti-ghigi-heinzner-1-preprint}.  Since $\mathrm{Dir}(F)$ is
  contained in the orthogonal complement of $\lies$, and $\Dir(\sigma
  ) \subset \Dir(F)$, we have $\Dir(\sigma ) \subset \lia \cap
  \lies^\perp$.  Then $\sigma ^\perp \subset \lies$.  Hence
  $K^{\sigma^{\perp}} \cdot \sigma \subset K^{\lies} \cd \sigma
  \subset F$. We prove the reverse inclusion.  If $y\in F$, then
  $F\cap \widehat{K\cdot y}$ is a face of $\widehat{K\cdot y}$.  Set
  $\tilde \sigma = \pi (F\cap \widehat{K\cdot y})$. We have $\tilde
  \sigma \subset \sigma$ and by Proposition 3.6 in
  \cite{biliotti-ghigi-heinzner-2} we also have that $F\cap
  \widehat{K\cdot y}=K^{\tilde \sigma^\perp} \cdot \tilde \sigma$. On
  the other hand, $\sigma^\perp \subset \tilde \sigma^\perp$, so
  $K^{\tilde \sigma^\perp}\subset K^{\sigma^\perp}$ and
  \[
  F\cap \widehat{K\cdot y} =K^{\tilde \sigma^\perp} \cdot \tilde
  \sigma \subset K^{\sigma^\perp} \cdot \sigma.
  \]
  This implies $F= K^{\sigma^\perp} \cd \sigma$. Note that $F$ is
  proper if $\sigma$ is. It remains to prove that $\sigma$ is proper,
  when $F$ is proper.

  Let $\mathrm{Aff}(E)=x_o +\mathfrak q_E$. Note that $\mathfrak
  q_E=\{x-y:\, x,y\in \mathrm{Aff}(E)\}$ implies that $\mathfrak q_E$
  is $K$--invariant.  Since $K$ acts on $\liep$ by isometries, we may
  assume that $x_o$ is orthogonal to $\mathfrak q$. Note that $x_o$ is
  uniquely defined by this condition.  It follows that $x_o$ is a $K$
  fixed point and $E=x_0 +E_1$, where $E_1$ is a $K$--invariant convex
  body of $\mathfrak q_E$.  Proposition \ref{deco-face} applied to
  $\mathfrak q_E$ yields $K_1, K_2$ such that $G_1 =K_1 \exp
  (\mathfrak q_E)$ is a connected compatible semisimple real Lie
  group, $K=K_1 \cdot K_2$ and for any $x\in E$ we have
  \[
  K \cdot x= K \cdot (x_o +x_1)=x_o + K \cdot x_1=x_o + K_1 \cdot
  x_1=K_1 \cdot x.
  \]
  since $\mathfrak q_E$ is fixed pointwise by $K_2$. By Lemma
  \ref{spezza-abeliana}, $\lia=\lia_E \oplus \lia_E'$, where $\lia_E$
  is a maximal abelian subalgebra of $\mathfrak q_E$ and $\lia_E'$ is
  a maximal abelian subalgebra of $\mathfrak q_E'$. Since
  $\pi(E)=\pi(x_o)+ \pi(E_1)$ and $\mathrm{Dir} (E_1)=\mathfrak q_E$,
  it follows that the direction of $\pi(E)$ is $\lia_E$. If
  $\sigma=\pi(F)=\pi(E)=E\cap \lia$, then $\sigma^\perp=\lia_E'$ and
  so $K_1 \subset K^{\lia_E'}$. It follows that
  \[
  F= K^{\lia_E'} \cdot (E\cap \lia )= K_1 \cdot (E\cap \lia)= K \cdot
  (E\cap \lia)=E.
  \]
  where the last equality follows by Lemma \ref{gichev}. Hence, if $F$
  is proper, then $\sigma=\pi(F) \subsetneq \pi(E)=E \cap \lia$.
\end{proof}
\begin{prop}\label{faccia-faccetta2}
  Let $F$ be a proper face and let $\lies$ as in Proposition
  \ref{fgo}.  Let $\lia \subset \liep$ be a maximal abelian subalgebra
  containing $\mathfrak s$. Then $F$ is exposed if and only if $F\cap
  \lia$ is.
\end{prop}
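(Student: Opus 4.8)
The plan is to establish the two implications separately. Throughout, write $\sigma := F\cap\lia$ and let $\pi\colon\liep\to\lia$ be the orthogonal projection. Since the $\lies$ of Proposition \ref{fgo} satisfies $\lia\supset\lies$, Lemma \ref{faccia-faccetta} already applies to $F$: it identifies $\sigma=\pi(F)=F\cap\lia$ as a proper face of $P$ and supplies the reconstruction $F=K^{\sigma^{\perp}}\cd\sigma$. The whole argument will turn on applying this reconstruction formula twice.

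For the forward implication ($F$ exposed $\Rightarrow\sigma$ exposed) I would argue as follows. Suppose $F=F_\beta(E)$ for some $\beta\in\liep$, put $\beta_0:=\pi(\beta)\in\lia$ and $m:=\max_{y\in E}\sx y,\beta\xs$. The key observation is that on $P\subset\lia$ the two functionals $\sx\cdot,\beta\xs$ and $\sx\cdot,\beta_0\xs$ coincide, because $\beta-\beta_0\perp\lia$. Hence $\sigma=F\cap\lia=\{x\in P:\ \sx x,\beta_0\xs=m\}$. I then check $m=\max_{x\in P}\sx x,\beta_0\xs$: the inequality ``$\le$'' holds since $P\subset E$ forces $\sx x,\beta_0\xs=\sx x,\beta\xs\le m$ for every $x\in P$, while the reverse inequality holds because $\sigma=\pi(F)\neq\vacuo$ exhibits a point of $P$ on which the value $m$ is attained. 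Therefore $\sigma=F_{\beta_0}(P)$ is exposed.

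The reverse implication ($\sigma$ exposed $\Rightarrow F$ exposed) is the heart of the matter. Assume $\sigma=F_{\beta_0}(P)$ for some $\beta_0\in\lia$ and introduce the exposed face $\tilde F:=F_{\beta_0}(E)$ of $E$. First, the forward direction applied to $\tilde F$ (whose exposing vector $\beta_0$ already lies in $\lia$, so $\pi(\beta_0)=\beta_0$) gives $\tilde F\cap\lia=F_{\beta_0}(P)=\sigma$. Next I would feed $\tilde F$ into Lemma \ref{faccia-faccetta}. Since $\tilde F$ is exposed, Proposition \ref{exposed-face} shows $\tilde F\subset\liep^{\beta_0}$, that $\tilde F$ is $K^{\beta_0}$--stable, and that $\Dir(\tilde F)\subset\beta_0^{\perp}$; these are exactly the conclusions of Proposition \ref{fgo} for the abelian subalgebra $\R\beta_0$, and $\R\beta_0\subset\lia$ because $\beta_0\in\lia$. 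Hence Lemma \ref{faccia-faccetta} applies to $\tilde F$ with $\lies=\R\beta_0$ and yields $\tilde F=K^{(\tilde F\cap\lia)^{\perp}}\cd(\tilde F\cap\lia)=K^{\sigma^{\perp}}\cd\sigma$. Comparing with $F=K^{\sigma^{\perp}}\cd\sigma$ gives $F=\tilde F=F_{\beta_0}(E)$, so $F$ is exposed.

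I expect the only real idea, and hence the main obstacle, to be in the reverse direction: manufacturing the candidate exposed face $\tilde F=F_{\beta_0}(E)$ from the exposing vector of $\sigma$, and then recognizing that Lemma \ref{faccia-faccetta} can be invoked for both $F$ and $\tilde F$ precisely because they have the same trace $\sigma$ on $\lia$ while the reconstruction $K^{\sigma^{\perp}}\cd\sigma$ depends only on $\sigma$. The step requiring genuine care is verifying that $\tilde F$ satisfies the hypotheses of Lemma \ref{faccia-faccetta} relative to the \emph{given} $\lia$: one must use that the exposing vector $\beta_0$ lies in $\lia$, so that the subalgebra $\R\beta_0$ produced by Proposition \ref{exposed-face} is contained in $\lia$ and the same $\lia$ (and therefore the same $\sigma^{\perp}$) can be used for $F$ and for $\tilde F$.
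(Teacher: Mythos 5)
Your proof is correct and follows essentially the same route as the paper: the forward direction projects the exposing vector onto $\lia$, and the reverse direction compares $F$ with $F_{\beta_0}(E)$ by observing that Lemma \ref{faccia-faccetta} lets the trace on $\lia$ determine the face. You merely spell out more explicitly the verifications (that the maximum over $E$ is attained on $P$, and that $\R\beta_0\subset\lia$ qualifies as the $\lies$ needed to apply Lemma \ref{faccia-faccetta} to $F_{\beta_0}(E)$) which the paper leaves implicit.
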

\begin{proof}
  Assume that there exists $\beta\in \liep$ such that $F=F_\beta
  (E)$. Since $F\cap \lia=\sigma$ is a proper face of $P$, the point
  $\beta$ is not orthogonal to $\lia$. We have $\beta=\beta_1 \oplus
  \beta_2$, with $\beta_1 \in \lia$ different from zero and $\beta_2$
  orthogonal to $\lia$. Therefore $F_\beta (E) \cap \lia=F_{\beta_1}
  (E) \cap \lia=F_{\beta_1}(P)=\sigma$.  Now, assume that there exists
  $\beta\in \lia$ such that $\sigma = F_\beta (\polp)$.  Let $F' : =
  F_\beta( E)$.  By Proposition \ref{exposed-face} $F' \subset
  \liep^\beta$. Moreover $\lia \subset \liep^\beta$ since $\beta\in
  \lia$.  By Lemma \ref{faccia-faccetta} the intersection of a face
  with $ \lia$ determines the face. Since $F' \cap \lia=F_\beta
  (P)=\sigma = F\cap \lia$ we conclude that $F=F'$. Thus $F$ is
  exposed.
\end{proof}
\begin{figure}
  \centering
  \includegraphics[scale=0.3] {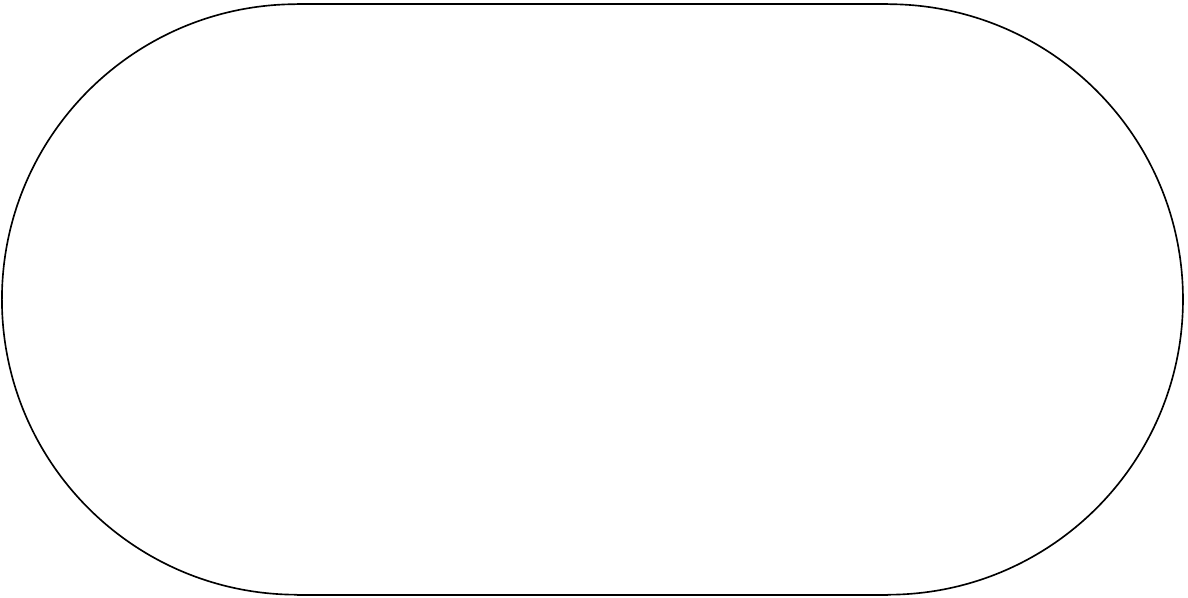}
  \caption{}
  \label{figP}
\end{figure}
% \verde{Can one prove at this point that $F$ is an exposed face iff
% $F\cap \lia$ is?}
\begin{remark}
  Given a Weyl--invariant convex body $P \subset \lia $, set $E:=K\cd
  P$.  By Lemma \ref{gichev} $E$ is a $K$-invariant convex body in
  $\liep$ and $P =E\cap \lia$.  Thus a general $P$ can be realized as
  $E\cap \lia$.  A general Weyl--invariant convex body $P$ can have
  non--exposed faces. For example take $G=U^\C = \Sl(2,\C) \times
  \Sl(2,\C)$ and $K=\SU(2) \times \SU(2)$. Then $\lia = \R^2$ and the
  Weyl group is isomorphic to $\Zeta/2 \times \Zeta/2$ where the
  generators are given by the reflections on the axes.  The picture in
  Fig. \ref{figP} is a Weyl--invariant $P$ with exactly $4$
  non--exposed faces. By the Proposition also the corresponding body
  $E \subset i\su(2) \oplus i\su(2)$ has non--exposed faces.
\end{remark}
\section{Proof of the main results}
%%% umpfi
%%%%%%%%%%%%%%%%%%%%%%%%%%%%%%%%%%%%%%%%%%%%%%%
Let $\lia\subset \liep$ and define the following map
\[
\Upsilon: \facesp \lra\faces, \ \ \sigma \mapsto K^{\sigma^\perp}\cdot
\sigma
\]
Since $\sigma$ is $\mathcal N_{K^{\sigma^\perp}} (\lia)$-invariant, it
follows from Lemma \ref{gichev} that $\Upsilon(\sigma)$ is a face of
$E$.  \setcounter{yuppo}{0}
\begin{yuppi}
  The map $\Upsilon$ induces a bijection between $\facesp/\mathcal
  N_K(a)$ and $\faces/K$.
\end{yuppi}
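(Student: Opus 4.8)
The plan is to show that the induced map $\bar\Upsilon$ is well defined on the quotients, surjective and injective, reading bijectivity as the conjunction of the last two. I take for granted from the discussion preceding the statement that $\Upsilon(\sigma)$ is a genuine face of $E$, and I record the companion identity $\Upsilon(\sigma)\cap\lia=\sigma$, which follows by applying Lemma \ref{gichev} inside $\liep^{\sigma^\perp}$ to the group $K^{\sigma^\perp}$ (for which $\lia$ is a maximal abelian subalgebra, since $\sigma^\perp\subseteq\lia$): indeed $\Upsilon(\sigma)\cap\lia=\pi(\Upsilon(\sigma))=\pi(K^{\sigma^\perp}\cdot\sigma)=\sigma$. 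For the descent to quotients, let $n\in\nora$ and let $\sigma$ be a face of $P$. Then $n\sigma$ is again a face of $P$, its direction is $n\cdot\Dir(\sigma)$, so $(n\sigma)^\perp=n\cdot\sigma^\perp$ and hence $K^{(n\sigma)^\perp}=nK^{\sigma^\perp}n\meno$. Therefore $\Upsilon(n\sigma)=nK^{\sigma^\perp}n\meno\cdot n\sigma=n\cdot\Upsilon(\sigma)$, so $\Upsilon(n\sigma)$ and $\Upsilon(\sigma)$ lie in one $K$--orbit and $\bar\Upsilon$ is well defined.

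For surjectivity, start from a face $F$ of $E$ and choose $\lies$ as in Proposition \ref{fgo}. Being abelian, $\lies$ lies in a maximal abelian subalgebra, and all of these are $K$--conjugate to $\lia$; so after replacing $F$ by $gF$ for a suitable $g\in K$, which does not alter the class of $F$ in $\faces/K$, I may assume $\lies\subseteq\lia$. Lemma \ref{faccia-faccetta} then gives that $\sigma:=F\cap\lia$ is a face of $P$ and that $F=K^{\sigma^\perp}\cdot\sigma=\Upsilon(\sigma)$, whence $[F]=\bar\Upsilon([\sigma])$.

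Injectivity is the substantive step, and I will base it on the auxiliary fact that slicing by $\lia$ preserves relative interiors: for the $K^{\sigma^\perp}$--invariant convex body $F=\Upsilon(\sigma)\subseteq\liep^{\sigma^\perp}$ one has $\relint F\cap\lia=\relint(F\cap\lia)$. This will come from the standard identity $\relint(F\cap\lia)=\relint F\cap\lia$, valid as soon as $\relint F\cap\lia\neq\vacuo$, together with the observation that the barycenter of $F$ is a $K^{\sigma^\perp}$--fixed point, hence lies in every maximal abelian subalgebra of $\liep^{\sigma^\perp}$, in particular in $\lia$, and of course in $\relint F$. Granting this, suppose $\Upsilon(\sigma_2)=k\cdot\Upsilon(\sigma_1)$ with $k\in K$ and set $F_i=\Upsilon(\sigma_i)$, so $F_i\cap\lia=\sigma_i$ and $F_2=kF_1$. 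Choose $v\in\relint\sigma_1=\relint F_1\cap\lia$; then $kv\in\relint F_2$, and by Lemma \ref{gichev} I may write $kv=h\cdot w$ with $h\in K^{\sigma_2^\perp}$ and $w\in\sigma_2$. As $h$ preserves $F_2$ and its relative interior, $w=h\meno kv\in\relint F_2\cap\lia=\relint\sigma_2$, while $w=(h\meno k)v$ is $K$--conjugate to $v$. Since $v,w$ both lie in $\lia$ and are $K$--conjugate, the standard fact that $K$--conjugate points of $\lia$ are $\nora$--conjugate produces $n\in\nora$ with $w=nv$. Then $n\sigma_1$ is a face of $P$ with $nv=w\in\relint(n\sigma_1)\cap\relint\sigma_2$, and since distinct faces have disjoint relative interiors I conclude $n\sigma_1=\sigma_2$; thus $\sigma_1,\sigma_2$ have the same class in $\facesp/\nora$.

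I expect the relative--interior lemma to be the main obstacle, since it is what couples the face structure of $E$ to that of $P$ at the level of interior points and thereby legitimizes transporting a relative interior point of $\sigma_1$ to one of $\sigma_2$. Everything else is either formal (the descent computation), a direct appeal to Lemma \ref{faccia-faccetta} combined with conjugacy of maximal abelian subalgebras (surjectivity), or the classical statement that $K$--conjugate points of $\lia$ are conjugate under $\nora$.
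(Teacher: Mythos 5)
Your proof is correct, and while the well--definedness and surjectivity steps coincide with the paper's, your injectivity argument takes a genuinely different route. The paper proves injectivity by applying Proposition \ref{deco-face} to $G^{\sigma_2^\perp}$ and $\lieq_{F_2}=\Dir(F_2)$: it identifies $\Dir(\sigma_2)$ and $\sigma_2^\perp$ as maximal abelian subalgebras of $\lieq_{F_2}$ and $\lieq_{F_2}'$ respectively, and uses conjugacy of maximal abelian subalgebras inside each ideal to correct the given $g\in K$ by an element $k_1k_2\in K^{\sigma_2^\perp}$ into some $w\in\nora$ satisfying simultaneously $w\cd F_1=F_2$ and $w\cd\sigma_1=\sigma_2$. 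You instead transport a single relative--interior point: the identity $\relint F\cap\lia=\relint(F\cap\lia)$ (which does hold --- the barycenter of the $K^{\sigma^\perp}$--invariant body $F\subset\liep^{\sigma^\perp}$ is $K^{\sigma^\perp}$--fixed, hence commutes with all of $\liep^{\sigma^\perp}$ by invariance and definiteness of $B$ on $\liek^{\sigma^\perp}$, hence lies in $\lia$; then the standard intersection formula for relative interiors applies), combined with the classical fact that $K$--conjugate points of $\lia$ are $\nora$--conjugate and with disjointness of relative interiors of distinct faces. Your version replaces the structural decomposition of Proposition \ref{deco-face} by softer convex geometry plus one classical Weyl--group fact, which is arguably more elementary; the paper's version yields directly the slightly stronger output of one element of $\nora$ moving both $F_1$ to $F_2$ and $\sigma_1$ to $\sigma_2$, though you recover this a posteriori from the $\nora$--equivariance of $\Upsilon$. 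One cosmetic remark: writing $kv=h\cd w$ with $h\in K^{\sigma_2^\perp}$, $w\in\sigma_2$ needs only the definition $F_2=K^{\sigma_2^\perp}\cd\sigma_2$, not Lemma \ref{gichev}.
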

\begin{proof}
  Set $\mathcal N:=\nora$.  We first show that $\Upsilon$ is $\mathcal
  N$-equivariant. Let $w\in \mathcal N$. Then $\sigma'= w \sigma$
  implies $K^{{\sigma'}^\perp}=w K^{\sigma^\perp} w^{-1}$ and
  therefore $\Upsilon(\sigma')= w \Upsilon (\sigma)$. This means that
  the map
  \[
  \tilde \Upsilon: \facesp / \mathcal N \lra \faces /K, \ \ [\sigma]
  \mapsto K^{\sigma^\perp}\cdot \sigma
  \]
  is well-defined. Next, we prove that $\tilde \Upsilon$ is
  injective. Assume for some $g\in K$ $g\cdot F_1=F_2$ where
  $F_1=\Upsilon (\sigma_1)$ and $F_2=\Upsilon (\sigma_2)$. Since $F_2
  =K^{\sigma_2^\perp}\cdot \sigma_2$, the face $F_2$ is a
  $K^{\sigma_2^\perp}$--invariant convex body.  Moreover $\sigma_2
  \subset \lia \subset \liep^{\sigma_2^\perp}$ and
  $\liep^{\sigma_2^\perp}$ is
  $K^{\sigma_2^\perp}$--invariant. Therefore $F_2$ is contained in
  $\liep^{\sigma_2^\perp}$. It follows that $\mathrm{Aff}(F_2)=x_o +
  \mathfrak q_{F_2}$, where $\mathfrak q_{F_2}$ is a
  $K^{\sigma_2^\perp}$ invariant subspace of $\liep^{\sigma_2^\perp}$,
  $x_o$ is a fixed $K^{\sigma_2^\perp}$ point and it is orthogonal
  orthogonal to $\mathfrak q_{F_2}$. We apply Proposition
  \ref{deco-face} to the group $G^{\sigma_2^{\perp}}$ and $\mathfrak
  q_{F_2}$. Thus $\mathfrak h_{F_2} = [\mathfrak q_{F_2}, \mathfrak
  q_{F_2}] \oplus \mathfrak q_{F_2}$ and its orthogonal complement in
  $\lieg^{\sigma_2^\perp}$, that we denote by $\mathfrak h_{F_2}'$,
  are commuting ideal. The Proposition \ref{deco-face} also yields
  subgroups $K_1, K_2 \subset K^{\sigma_2\perp}$ such that any two
  maximal subalgebras in $\mathfrak q_{F_2}$, respectively $\mathfrak
  q_{F_2}'$, are interchanged by $K_1$, respectively $K_2$.  Since
  $\sigma_2 \subset \lia$, also $\Dir(\sigma_2) \subset \lia$ and we
  may decompose $\lia =\Dir(\sigma_2) \oplus \sigma_2^\perp$. But
  $\Dir(\sigma_2)$ is contained also in $\lieq_{F_2}$ since $\sigma_2
  \subset F_2 $. So $\sigma_2^\perp \subset \mathfrak q_{F_2}^\perp
  \cap \liep= \mathfrak q_{F_2}'$. By dimension $\Dir(\sigma_2) $ is a
  maximal subalgebra in $\mathfrak q_{F_2}$ and $\sigma_2^\perp $ is a
  maximal subalgebra in $\mathfrak q_{F_2}'$.  On other hand from
  $g\cdot F_1 = F_2$ it follows that $g \cdot \Dir(\sigma_1) \subset
  \mathfrak q_{F_2}$ and $g \cdot \sigma_1^\perp \subset \mathfrak
  q_{F_2}$, and they are also maximal subalgebras in these spaces. By
  the Proposition \ref{deco-face} (b) and (c) there exist $k_1 \in
  K_1, k_2 \in K_2$ such that
  \begin{gather*}
    (k_1 g)\cdot \Dir(\sigma_1) = \Dir(\sigma_2)\\
    (k_2 g) \cdot \sigma_1^\perp = \sigma_2^\perp.
  \end{gather*}
  Since $x_0$ is fixed by the larger group $K^{\sigma_2^\perp}$ it
  follows that $k_1 g \sigma _1 = \sigma_2$.  Moreover $k_1k_2 = k_2
  k_1$ since $[\mathfrak h_{F_2}, \mathfrak h_{F_2}']=0$.  For the
  same reason $\mathfrak q_{F_2}'$ is fixed pointwise by $K_1$ and
  $\mathfrak q_{F_2}$ is fixed pointwise by $K_2$.  Set $k=k_1k_2$ and
  $w= kg$. Then $ k\in K^{\sigma_2 ^ \perp}$ and $w \in K$. We get
  \begin{gather*}
    w \cdot \Dir (\sigma_1) = \Dir(\sigma_2)\\
    w\cdot \sigma_1 ^\perp = \sigma_2 ^ \perp.
  \end{gather*}
  Thus $w \cdot \lia = \lia$, i.e. $w \in \mathcal N$.  Since $k\in
  K^{\sigma_2^\perp}$, $w \cdot F_1 = (k g) \cdot F_1 = k \cdot F_2
  =F_2$.  Since $\sigma_1 = (x_0 + \Dir(\sigma_1)) \cap F_1$ and
  similarly for $F_2$, we conclude that $w \sigma _1 = \sigma_2$.
  Finally we prove that $\tilde \Theta$ is surjective. Let $F\subset
  \c$ be a face. Then $F \subset \liep^{\lies}$ for some abelian
  subalgebra $\lies\in \liep$. Then there exists $k\in K$ such that $k
  \cdot \lia \subset \liep^{\lies}$. Therefore $k^{-1} \cdot F \subset
  \liep^{(k^{-1} \cdot \lies )}$ and $\lia \subset \liep^{(k^{-1}
    \cdot \lies )}$.  By Proposition \ref{faccia-faccetta}, $k \cdot
  F=K^{\sigma^\perp} \cdot \sigma$ where $\sigma=(k \cdot F) \cap
  \lia$ and so $\tilde \Upsilon$ is surjective.
\end{proof}
As an application of the above theorem and Proposition
\ref{faccia-faccetta2}, we get the following
result.% quoted in the Introduction.
\begin{yuppi} \label{esposte-esposte} The faces of $E$ are exposed if
  and only if the faces of $P$ are exposed.
\end{yuppi}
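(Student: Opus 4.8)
The plan is to deduce this global statement from the pointwise equivalence of Proposition \ref{faccia-faccetta2} — a single proper face $F \subset E$ is exposed if and only if $F \cap \lia$ is — combined with the bijection of Theorem \ref{main1} and the observation that exposedness is a $K$--invariant property of faces. The latter is immediate: since $K$ acts on $\liep$ by isometries preserving $E$, one has $k \cdot F_\beta(E) = F_{k \cdot \beta}(E)$ for all $k \in K$, so the $K$--orbit of an exposed face consists of exposed faces. Since the improper face $E$ is exposed (take $\beta = 0$), it suffices to treat proper faces throughout.

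I would then argue the two implications. Assume first that every face of $E$ is exposed and let $\sigma$ be a proper face of $P$. By Lemma \ref{faccia-faccetta} the set $F := \Upsilon(\sigma) = K^{\sigma^\perp}\cdot\sigma$ is a proper face of $E$ with $F \cap \lia = \sigma$, and it is exposed by hypothesis; Proposition \ref{faccia-faccetta2} then yields that $\sigma = F \cap \lia$ is exposed. Conversely, assume every face of $P$ is exposed and let $F$ be a proper face of $E$. By the surjectivity established in the proof of Theorem \ref{main1}, there are $k \in K$ and a proper face $\sigma$ of $P$ with $k \cdot F = \Upsilon(\sigma)$; since $\sigma$ is exposed, Proposition \ref{faccia-faccetta2} makes $\Upsilon(\sigma) = k \cdot F$ exposed, and the $K$--invariance above transfers this back to $F$.

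The step I expect to require the most care is the application of Proposition \ref{faccia-faccetta2} to $F = \Upsilon(\sigma)$ using the \emph{fixed} maximal abelian subalgebra $\lia$ that defines $P$. Proposition \ref{faccia-faccetta2} is phrased for a maximal abelian subalgebra containing the algebra $\lies$ attached to $F$ by Proposition \ref{fgo}, so I must verify that $\lia$ is an admissible choice for $\Upsilon(\sigma)$. The candidate is $\lies := \sigma^\perp \subset \lia$: it is abelian, $F = K^{\sigma^\perp}\cdot\sigma$ lies in $\liep^{\sigma^\perp}$ and is $K^{\sigma^\perp}$--stable, so part (a) of Proposition \ref{fgo} holds, and only $\Dir(F) \subset (\sigma^\perp)^\perp$ remains. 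This orthogonality is exactly what the ideal decomposition in the injectivity argument of Theorem \ref{main1} provides: writing $\mathrm{Aff}(F) = x_o + \lieq_F$ and applying Proposition \ref{deco-face} to $G^{\sigma^\perp}$ and $\lieq_F$, the direction $\Dir(F) = \lieq_F$ sits in the ideal $\mathfrak h_F = [\lieq_F,\lieq_F]\oplus\lieq_F$, while $\sigma^\perp$ is a maximal abelian subalgebra of the complementary ideal $\mathfrak h_F'$; as these ideals are $B$--orthogonal, $\Dir(F) \perp \sigma^\perp$. Alternatively, I could bypass Proposition \ref{faccia-faccetta2} as a black box and reuse only the first half of its proof, which passes from $F = F_\beta(E)$ and a proper intersection $F \cap \lia = \sigma$ to $\sigma = F_{\beta_1}(P)$ without ever invoking $\lia \supset \lies$, and hence applies to $F = \Upsilon(\sigma)$ directly.
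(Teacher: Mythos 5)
Your proof is correct and takes essentially the same route as the paper: the bijection of Theorem \ref{main1}, the $K$--invariance of exposedness, and the pointwise equivalence of Proposition \ref{faccia-faccetta2}. Your extra verification that the fixed $\lia$ is an admissible choice for applying Proposition \ref{faccia-faccetta2} to $F=\Upsilon(\sigma)$ (taking $\lies=\sigma^\perp$ and checking $\mathrm{Dir}(F)\perp\sigma^\perp$) fills in a detail the paper leaves implicit, but does not change the argument.
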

\begin{proof}
  By the above Theorem, the map $\sigma \mapsto K^{\sigma^\perp}\cd
  \sigma$ induces a bijection between $\facesp/\mathcal N$ and
  $\faces/K$. Hence, keeping in mind that if $F_1 =kF_2$, then $F_1$
  is exposed if and only if $F_2$, the result follows from Proposition
  \ref{faccia-faccetta2}.
\end{proof}
\begin{remark}\label{compatible}
  We have proven Theorems \ref{main1} and \ref{esposte-esposte} under
  the assumption that $G$ is a connected real semisimple Lie group.
  From this it follows that both theorems hold true for any connected
  compatible subgroup of $U^\C$, since such a subgroup is real
  reductive in the sense of \cite[p. 446]{knapp-beyond} and thus it is
  the product of a semisimple connected subgroup and an abelian
  subgroup, see e.g. \cite[p. 453]{knapp-beyond}.
\end{remark}
\section{Convex hull of the gradient map image}\label{gradient-map}
Let $U$ be a compact connected Lie group and $U^\C$ its
complexification.  Let $(Z,\omega)$ be a K\"ahler manifold on which
$U^\C$ acts holomorphically. Assume that $U$ acts in a Hamiltonian
fashion with momentum map $\mu:Z \lra\liu^*$.  Let $G\subset U^\C$ be
a closed connected subgroup of $U^\C$ which is compatible with respect
to the Cartan decomposition of $U^\C$.  This means that $G$ is a
closed subgroup of $U^\C$ such that $G=K\exp (\liep)$, where $K=U\cap
G$ and $\liep=\lieg \cap i\liu$
\cite{heinzner-schwarz-stoetzel,heinzner-stoetzel-global}.  The
inclusion $i \liep \hookrightarrow \liu$ induces by restriction a
$K$-equivariant map $\mu_{i \liep}:Z \lra (i \liep)^*$.  Using a fixed
$U$-invariant scalar product $\scalo$ on $\liu$, we identify $\liu
\cong \liu^*$.  We also denote by $\scalo$ the scalar product on
$i\liu$ such that multiplication by $i$ be an isometry of $\liu$ onto
$i\liu$.  For $z \in Z$ let $\mup (z) \in \liep$ denote $-i$ times the
component of $\mu(z)$ in the direction of $i\liep$.  In other words we
require that $\sx \mup (z) , \beta \xs = - \sx \mu(z) , i\beta\xs$,
for any $\beta \in \liep$. Then we view $\mu_{i \liep}$ as a map
\begin{gather*}
  \mu_\liep : Z \ra \liep ,
\end{gather*}
which is called the $G$-\emph{gradient map} or \emph{restricted
  momentum map} associated to $\mu$.  For the rest of the paper we fix
a $G$-stable \emph{compact subset} $X \subset Z$ and we consider the
gradient map $\mup:X \lra \liep$ restricted on $X$.  We also set
\begin{gather*}
  \mupb:= \sx \mup, \beta \xs = \mu^{-i\beta}.
\end{gather*}
We will now study the convex hull of $\mup(X)$, that we denote by
$\cc$. Let $\lia\subset \liep$ be a maximal abelian subalgebra of
$\liep$ and let $\pi:\liep \lra \lia$ be the orthogonal projection
onto $\lia$. Then $\pi \circ \mup=:\mua$ is the gradient map
associated to $A=\exp(\lia)$. Let $Z^A$ be the set of fixed points of
$A$. We note that $\mua$ is locally constant on $Z^A$ since
$\mathrm{Ker}\, \mathrm{d} \mu_{\lia} (x)=(\lia\cdot x)^\perp$ (see
\cite{heinzner-stoetzel-global}).  Let $\mathfrak b$ a subspace of
$\lia$ and let $X^{\mathfrak b}=\{p\in X:\, \xi_X (p)=0$ for all $\xi
\in \mathfrak b\}$, where $\xi_X$ is the vector field induced by the
$A$ action on $X$.  Then the map $\mu_{\mathfrak b}:X^{\mathfrak b}
\lra \mathfrak b$, that is the composition of $\mup$ with the
orthogonal projection onto $\mathfrak b$, is locally constant
(\cite{heinzner-schuetzdeller}, Section $3$).  Since $X^{\mathfrak b}$
is compact, $\mu_{\mathfrak b} (X^{\mathfrak b})$ is a finite set. In
\cite{heinzner-schuetzdeller} it also shown that for any $y\in X^{(
  \mathfrak b )}:=\{p\in X:\, \lia_{p}=\mathfrak b \}$, where $\lia_p
:=\{\xi\in \lia:\, \xi_X (p)=0\}$, we have that $\mu_{\lia}(A\cdot y)
\subset \mu_{\lia} (y) + {\mathfrak b}^\perp$ is an open subset of the
affine space $\mu_{\lia} (y) + {\mathfrak b}^\perp$ (the orthogonal
complements are taken in $\lia$). Moreover $\mu_{\lia}(A \cdot y)$ is
a convex subset of $\mu_{\lia} (y) + {\mathfrak b}^\perp$ (see
\cite{heinzner-stotzel-2}) and therefore $\mu_{\lia}(\overline{A \cdot
  y})=\overline{\mu_{\lia}(A \cdot y)}$ is a convex body.

Let $P:=\widehat{\mua (X)}$. If $\beta\in \mu_{\lia}(X)$ is an
extremal point of $P$, and $y\in \mu_\lia^{-1}(\beta)$, then
$\mu_{\lia} (A \cdot y)$ is an open neighborhood of $\mu_{\lia} (y)$
in $\mu_\lia (y) + \mathfrak \lia_y^\perp$ and it is contained in
$\mu_\lia (X)\subset P$. Since $\mu_{\lia} (y)$ is an extremal point,
it follows that $\lia_y^\perp=\{0\}$ and so $y$ is a fixed point of
$A$.  Since $X$ is compact, the set $X^A$ has finitely many connected
components. Therefore $P$ has finitely many extremal points, i.e. it
is a polytope.  We have shown the following.
\begin{prop}\label{momentum-polytope}
  Let $X\subset Z$ be a $G$-invariant compact subset of $Z$.  Then the
  image $\mu_{\lia} (X^A)$ is a finite set $\{c_1, \ldots, c_p\}$ and
  $P=\widehat{\mu_{\lia} (X)}$ is the convex hull of $c_1,\ldots,c_p$.
\end{prop}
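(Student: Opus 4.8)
The plan is to identify the extreme points of the convex body $P = \widehat{\mua(X)}$ with the values of $\mua$ on the fixed point set $X^A$, and then to invoke Minkowski's theorem. Since $X$ is compact and $\mua$ is continuous, $\mua(X)$ is compact, so $P$ is a compact convex set; it therefore equals the convex hull of its extreme points $\ext P$, and moreover every extreme point of the convex hull of a compact set lies in that set, so $\ext P \subset \mua(X)$.

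First I would show that $\mua(X^A)$ is finite. Taking $\lieb = \lia$ in the facts recalled just above the statement, one has $X^{\lia} = X^A$, and $\mua$ restricted to $X^A$ is locally constant; this is exactly the relation $\mathrm{Ker}\, \mathrm{d}\mua(x) = (\lia \cdot x)^\perp$, whose right-hand side vanishes on $X^A$. Since $X^A$ is compact it has finitely many connected components, so $\mua(X^A)$ is a finite set $\{c_1, \ldots, c_p\}$. As each $c_i$ lies in $\mua(X)$, this already gives $\widehat{\{c_1, \ldots, c_p\}} \subset P$.

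The core step is the reverse inclusion, which I would obtain by proving $\ext P \subset \{c_1, \ldots, c_p\}$. Let $\beta \in \ext P$; by the remark above $\beta = \mua(y)$ for some $y \in X$, and I set $\lieb := \lia_y$, so that $y \in X^{(\lieb)}$. By the local description of the abelian gradient map recalled above, $\mua(A \cdot y)$ is a relatively open neighbourhood of $\beta$ inside the affine subspace $\beta + \lieb^\perp$, and it is contained in $\mua(X) \subset P$. But an extreme point of a convex set lies in the relative interior of no nondegenerate segment contained in the set, and hence cannot admit a relatively open neighbourhood inside $P$ within an affine subspace of positive dimension. Therefore $\lieb^\perp = \{0\}$, i.e. $\lia_y = \lia$, which means precisely that $y \in X^A$; consequently $\beta = \mua(y) \in \mua(X^A) = \{c_1, \ldots, c_p\}$.

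Combining the two inclusions yields $P = \conv(\ext P) \subset \widehat{\{c_1, \ldots, c_p\}} \subset P$, so $P$ is the convex hull of $c_1, \ldots, c_p$, which establishes both assertions. The step I expect to be the main obstacle is the local normal form for $\mua$ — namely that $\mua(A \cdot y)$ sweeps out a relatively open piece of $\beta + \lia_y^\perp$ — but this is exactly the cited local convexity result for abelian gradient maps, so once it is invoked the remainder reduces to the elementary fact that extreme points admit no such relatively open neighbourhoods.
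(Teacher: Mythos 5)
Your proposal is correct and follows essentially the same route as the paper: both arguments identify the extreme points of $P$ with values of $\mu_{\lia}$ at $A$-fixed points by using the local convexity/openness of $\mu_{\lia}(A\cdot y)$ in $\mu_{\lia}(y)+\lia_y^\perp$ to rule out $\lia_y\neq\lia$ at an extreme point, and then conclude by finiteness of $\mu_{\lia}(X^A)$ from local constancy on the fixed-point set together with compactness. The only difference is cosmetic: you make explicit the appeal to Minkowski's theorem and to the fact that $\ext P\subset\mu_{\lia}(X)$, which the paper leaves implicit.
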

As a corollary we get the following result.
\begin{yuppi}
  Let $X\subset Z$ be a $G$-invariant compact subset of $Z$. Then
  every face of $E=\widehat{\mu_\liep (X)}$ is exposed.
\end{yuppi}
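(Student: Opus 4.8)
The plan is to deduce this from Theorem~\ref{main2} together with the elementary fact that a polytope has only exposed faces. First I would check that $E=\widehat{\mu_\liep(X)}$ is a $K$--invariant convex body, so that the machinery of the previous sections applies. Indeed, $X$ is $G$--invariant, hence $K$--invariant, and $\mu_\liep$ is $K$--equivariant; therefore $\mu_\liep(X)$ is a $K$--invariant compact subset of $\liep$ and its convex hull $E$ is a $K$--invariant convex body. Since $G$ is a connected compatible subgroup of $U^\C$, Theorem~\ref{main2} is available here by Remark~\ref{compatible}.

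Next I would identify the slice $P=E\cap\lia$ with the polytope of Proposition~\ref{momentum-polytope}. Writing $\pi$ for the orthogonal projection onto $\lia$, Lemma~\ref{gichev}(i) gives $E\cap\lia=\pi(E)$; and since $\pi$ is linear it commutes with the formation of convex hulls, so $\pi(E)=\pi(\widehat{\mu_\liep(X)})=\widehat{\pi(\mu_\liep(X))}=\widehat{\mu_\lia(X)}$. By Proposition~\ref{momentum-polytope} this last set is the convex hull of the finite set $\mu_\lia(X^A)=\{c_1,\dots,c_p\}$, so $P=E\cap\lia$ is a convex polytope.

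Then I would invoke the classical fact that every face of a polytope is exposed (see e.g.\ \cite{schneider-convex-bodies}): a polytope is an intersection of finitely many closed half--spaces, and each of its faces in the sense of Section~\ref{uno} is cut out by a supporting hyperplane, i.e.\ is of the form $F_\beta(P)$ for a suitable $\beta\in\lia$. Hence all faces of $P$ are exposed.

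Finally, Theorem~\ref{main2} says that the faces of $E$ are exposed if and only if the faces of $P$ are exposed. Combining this with the previous step gives that every face of $E$ is exposed, as desired. I expect no serious obstacle, since all the substantial work is contained in Theorem~\ref{main1} (and hence Theorem~\ref{main2}) and in Proposition~\ref{momentum-polytope}. The only point demanding care is the bookkeeping of the second paragraph, namely verifying that the polytope $P$ furnished by Proposition~\ref{momentum-polytope} is precisely the slice $E\cap\lia$ to which Theorem~\ref{main2} refers; once this identification is in place the conclusion is immediate.
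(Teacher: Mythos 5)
Your argument is correct and follows the same route as the paper: identify $E\cap\lia=\pi(E)=\widehat{\mu_\lia(X)}$ via Lemma~\ref{gichev}(i) and the linearity of $\pi$, conclude from Proposition~\ref{momentum-polytope} that this slice is a polytope (hence has only exposed faces), and then apply Theorem~\ref{main2} together with Remark~\ref{compatible}. The only difference is that you spell out the bookkeeping (the $K$--invariance of $E$ and the standard fact about polytope faces) that the paper leaves implicit.
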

\begin{proof}
  Since
  \begin{gather*}
    \pi (\cc)= \widehat{\pi (\mup (X))}= \widehat{\mua(X)},
  \end{gather*}
  by Lemma \ref{gichev} (i) we conclude that $\cc \cap \lia = \pi(E)
  =P$ and by Proposition \ref{momentum-polytope}, Remark
  \ref{compatible} and Theorem \ref{main2} we get that every face of
  $\cc$ is exposed.
\end{proof}
We call $P$ the \emph{momentum polytope}. If $G=U^\C$ and $X$ is a
complex connected submanifold of $Z$, then $P=\mua(X)$ by the
Atiyah-Guillemin-Sternberg convexity theorem
\cite{atiyah-commuting,guillemin-sternberg-convexity-1}. The same
holds for $X$ an irreducible semi-algebraic subset of a Hodge manifold
$Z$ \cite{kostant-convexity,heinzner-schuetzdeller,bghc}.

Since any proper face $F$ of $ \cc$ is exposed, the set $C_F$ defined
in \eqref{def-CF} is a non-empty convex cone in $\liep$.  Set
\begin{gather*}
  K^F:=\{g\in K: g\cdot F = F\}.
\end{gather*}
By Proposition $5$ in \cite{biliotti-ghigi-heinzner-1-preprint} we
have $\CF^{K^F}:=\{\beta\in C_F:\, K^F \cd \beta=\beta\}\neq
\emptyset$.  This means that for a proper face $F$ one can find a
$K^F$--invariant vector $\beta$ such that $F_\beta(E) = F$.  For
$\beta \in \liep$, denote by $X^\beta$ the set of points of $X$ that
are fixed by $\exp (\R\beta)$.  If $\beta \in C_F$, let
\begin{gather*}
  \xmax:=\{x \in X: \mupb(x) = \max_X \mupb\}.
\end{gather*}
Since the function $\mup^\beta$ is $K^\beta$--invariant the set
$\xmax$ is $K^\beta$--invariant.  Moreover $\xmax$ is a union of
finitely many connected components of $X^\beta$ and $X^\beta$ is
$G^\beta$-stable.  Every connected component of $G^\beta$ meets
$K^\beta$. This implies that $G^\beta$ leaves $\xmax$ invariant. Next
we show that $\xmax$ does not depend on the choice of $\beta$ in $
C_F$.
\begin{lemma}\label{face-set}
  If $\beta \in C_F$, then $\xmax=\mup^{-1} (F)$. Moreover $F$ is the
  convex hull of $\mup(\xmax) $.
\end{lemma}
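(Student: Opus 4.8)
The plan is to reduce the whole statement to the elementary fact that a linear functional attains the same maximum on a compact set and on its convex hull. First I would record that $\mupb = \sx \mup(\cdot), \beta \xs$, so that $\mupb$ is nothing but the restriction to $X$ of the linear functional $\sx \cdot, \beta \xs$ on $\liep$. Since $\cc = \widehat{\mup(X)}$ and $\mup(X)$ is compact (it is the continuous image of the compact set $X$), every point of $\cc$ is a convex combination of points of $\mup(X)$; as $\sx \cdot, \beta \xs$ is affine, its maximum over $\cc$ is already attained on $\mup(X)$ and thus equals $m := \max_X \mupb$. Because $\beta \in \CF$, the definition of $\CF$ gives $F = F_\beta(\cc) = \{ y \in \cc : \sx y, \beta \xs = m \}$.

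With this in hand the first identity is immediate. For $x \in X$ one has $\mup(x) \in F$ if and only if $\sx \mup(x), \beta \xs = m$, that is, if and only if $\mupb(x) = \max_X \mupb$. The former condition describes $\mup\meno(F)$ and the latter describes $\xmax$, so the two sets coincide and $\xmax = \mup\meno(F)$.

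For the second assertion I would first observe that $\mup(\xmax) = \mup(\mup\meno(F)) = \mup(X) \cap F$, so that it suffices to prove $F = \widehat{\mup(X) \cap F}$. Here I would invoke two standard facts about the compact convex body $\cc$. By Milman's theorem the extreme points of $\cc = \widehat{\mup(X)}$ lie in $\mup(X)$; and every extreme point of the face $F$ is an extreme point of $\cc$ (if an extreme point $v$ of $F$ were the midpoint of a nondegenerate segment $[a,b] \subset \cc$, then $v \in \relint[a,b] \cap F$ would force $[a,b] \subset F$ by the defining property of a face, contradicting extremality of $v$ in $F$). Combining these, $\ext F \subset \mup(X) \cap F$. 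Since $F$ is itself a compact convex set, Minkowski's theorem yields $F = \widehat{\ext F} \subset \widehat{\mup(X) \cap F} \subset F$, whence equality and $F = \widehat{\mup(\xmax)}$.

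The calculations are all routine unwinding of the definitions of $\xmax$, $\CF$ and $F_\beta$; the one point deserving care, and the place where the argument genuinely uses structure, is the step $F = \widehat{\mup(X) \cap F}$, which rests on the interplay between the face structure of $\cc$ and its presentation as the convex hull of the compact image $\mup(X)$. I expect that to be the only nontrivial ingredient.
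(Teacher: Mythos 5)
Your proof is correct and follows essentially the same route as the paper: the first identity by comparing $\max_{\cc}\sx\cdot,\beta\xs$ with $\max_X\mupb$, and the second via $\ext F\subset \mup(X)\cap F$ combined with Minkowski's theorem. The only difference is cosmetic: where you prove $\ext F\subset\ext\cc$ directly and invoke Milman explicitly, the paper cites $\ext F=\ext E\cap F$ from an earlier paper of the authors.
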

\begin{proof}
  Fix $x \in X$. Then % $x\in \mup^{-1}(F)$ if and only if
  $\mup(x)\in F$ if and only if $ \sx \mup(x) , \beta \xs = \max_{v\in
    E} \sx v,\beta \xs$. Moreover $ \max_{v\in E} \sx v,\beta \xs =
  \max_{v\in \mup(X)} \sx v,\beta \xs = \max_{ X} \mupb$.  So $x \in
  \mup\meno(F)$ if and only if $x$ is a maximum of $\mup^\beta (x) $
  restricted to $X$.  This shows that $X_F^\beta=\mup^{-1} (F)$. The
  inclusion $\mup(X_F^\beta ) \subset F$ follows from the definition
  and therefore $\widehat {\mup(X_F^\beta ) } \subset F$.  By
  \cite[Lemma 3]{biliotti-ghigi-heinzner-1-preprint} $\ext F = \ext E
  \cap F$, so $\ext F \subset \mup(X) \cap F =\mup(X_F^\beta)$. It
  follows that $ F = \widehat{ \mup(X_F^\beta)}$.
\end{proof}
Motivated by the above Lemma we set $X_F:=\xmax$ where $\beta$ is any
vector in $C_F$. We also set
\[
Q^F =\{g\in G:\, g\cdot X_F =X_F \}.
\]
$Q^F$ is a closed Lie subgroup of $G$.

Given $\beta \in \liep$ define the following subgroups:
\begin{gather*}
  G^{\beta+} = \{g\in G:\, \lim_{t\mapsto -\infty }\exp (t\beta) g \exp (-t\beta)\ \mathrm{exists} \}, \\
  G^{\beta-} = \{g\in G:\, \lim_{t\mapsto +\infty }\exp (-t\beta) g
  \exp (t\beta)\ \mathrm{exists} \},
  \\
  % \end{gather*}
  % with unipotent radicals
  % \begin{gather*}
  R^{\beta+} = \{g\in G:\, \lim_{t\mapsto -\infty }\exp (t\beta) g \exp (-t\beta)=e \}, \\
  R^{\beta-} = \{g\in G:\, \lim_{t\mapsto +\infty }\exp (-t\beta) g
  \exp (t\beta)=e \}.
\end{gather*}
$G^{\beta+}$ (respectively $G^{\beta-}$) is a parabolic subgroup,
$R^{\beta+}$ (respectively $R^{\beta-}$) is its unipotent radical and
$G^\beta$ is a Levi factor. Therefore $G^{\beta + } = G^\beta \rtimes
R^{\beta+}$ (respectively $G^{\beta- } = G^\beta \rtimes R^{\beta-}$).

% the semi-direct product of $G^\beta$ with $R^{\beta+}$, respectively
% the semi-direct product of $G^\beta$ and $R^{\beta-}$.}  If
% $F=F_\beta(\cc)$, then $G^{\beta+}\subset Q^F$, since the maximum of
% $\mup^{\beta}$ is preserved by $G^{\beta+}$ \ch{SPIEGARE
% MEGLIO}. Therefore the Lie algebra ${\mathfrak q}^F$ of $ Q^F$ is
% parabolic.
\begin{lemma}\label{parabolici-levi-compatto}
  $ Q^F \cap K = K^F$.
\end{lemma}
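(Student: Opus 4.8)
The plan is to prove the equality by establishing both inclusions, the whole argument resting on the $K$--equivariance of the gradient map $\mup$ together with the two complementary descriptions of $X_F$ furnished by Lemma \ref{face-set}, namely $X_F = \mup\meno(F)$ and $F = \widehat{\mup(X_F)}$. Recall that $\mup(k \cd x) = k \cd \mup(x)$ for every $k \in K$ and every $x \in X$, since $K = U \cap G$ acts on $\liep$ by the restriction of $\Ad$ and $\mup$ was constructed to be $K$--equivariant. I would also use repeatedly that $K$ acts on $\liep$ by linear isometries, so that the action commutes with the formation of convex hulls: $k \cd \widehat{S} = \widehat{k \cd S}$ for any $S \subset \liep$.

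For the inclusion $K^F \subset Q^F \cap K$, I would take $g \in K^F$, so that $g \in K \subset G$ and $g \cd F = F$. Using the description $X_F = \mup\meno(F)$ and equivariance of preimages, $g \cd X_F = g \cd \mup\meno(F) = \mup\meno(g \cd F) = \mup\meno(F) = X_F$, which shows $g \in Q^F$ and hence $g \in Q^F \cap K$.

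For the reverse inclusion $Q^F \cap K \subset K^F$, I would take $g \in Q^F \cap K$, so that $g \in K$ and $g \cd X_F = X_F$. Using the description $F = \widehat{\mup(X_F)}$ together with equivariance and the compatibility of the linear $K$--action with convex hulls, I would compute $g \cd F = g \cd \widehat{\mup(X_F)} = \widehat{g \cd \mup(X_F)} = \widehat{\mup(g \cd X_F)} = \widehat{\mup(X_F)} = F$, so that $g \in K^F$.

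Combining the two inclusions yields $Q^F \cap K = K^F$. There is no serious obstacle here; the only point requiring care is the matched use of the two mutually inverse identities from Lemma \ref{face-set} — one needs $X_F = \mup\meno(F)$ for the first inclusion and $F = \widehat{\mup(X_F)}$ for the second — together with the elementary observation that $g \cd F = F$ is equivalent to $g\meno \cd F = F$, which is what legitimates the identity $g \cd \mup\meno(F) = \mup\meno(g \cd F)$ used above.
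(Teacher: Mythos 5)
Your proof is correct and follows essentially the same route as the paper: both inclusions are obtained from the $K$--equivariance of $\mup$ and the two identities $X_F=\mup\meno(F)$ and $F=\widehat{\mup(X_F)}$ of Lemma \ref{face-set}, with the convex-hull step commuting with the linear $K$--action. No issues.
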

\begin{proof}
  If $g \in Q^F \cap K$, then $ g \cdot X_F = X_F$. Since $\mup$ is a
  $K$-invariant map, $g\cdot \mup(X_F) = \mup( X_F)$. Taking the
  convex hull of both sides and using Lemma \ref{face-set} we get that
  $g\cdot F= F$, thus $g \in K^F$. Conversely, if $g\in K^F$, the
  equivariance of $\mup$ yields $X_F = \mup\meno(F) =\mup\meno(g\cdot
  F) = g X_F$, thus $g\in Q^F$.
\end{proof}
We are now ready to prove the connection between the set of the faces
of $E$ and parabolic subgroups of $G$.
\begin{prop}\label{prop-face-parabolic}
  $Q^F$ is a parabolic subgroup of $G$. Moreover $Q^F=G^{\beta+}$ for
  every $\beta \in \CF^{K^F}$.
\end{prop}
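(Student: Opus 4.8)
The plan is to establish the two inclusions $G^{\beta+}\subseteq Q^F$ and $Q^F\subseteq G^{\beta+}$ for an arbitrary $\beta\in\CF^{K^F}$, using throughout that $X_F=\xmax=\mup\meno(F)$ for every such $\beta$ by Lemma \ref{face-set}. A preliminary observation I would record first is that $K^F=K^\beta$. Indeed, $K^F\subseteq K^\beta$ is exactly the requirement $\beta\in\CF^{K^F}$, while the reverse inclusion holds because $G^\beta$ preserves $\xmax=X_F$ (noted just before Lemma \ref{face-set}), so that $K^\beta=K\cap G^\beta\subseteq Q^F\cap K=K^F$ by Lemma \ref{parabolici-levi-compatto}. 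In particular $G^\beta\subseteq Q^F$.

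For the inclusion $G^{\beta+}\subseteq Q^F$, since $G^{\beta+}=G^\beta\rtimes R^{\beta+}$ and $G^\beta\subseteq Q^F$ already, it suffices to show $R^{\beta+}\subseteq Q^F$, i.e. $g\cd X_F\subseteq X_F$ for $g\in R^{\beta+}$. Here I would use that the gradient of $\mupb$ with respect to the \Keler metric is the fundamental vector field of $\beta$, so that $t\mapsto\mupb(\exp(t\beta)\cd y)$ is nondecreasing for every $y$. Fix $x\in X_F\subseteq X^\beta$, so $\exp(t\beta)\cd x=x$; for $g\in R^{\beta+}$ the defining relation $\lim_{t\to-\infty}\exp(t\beta)g\exp(-t\beta)=e$ and continuity of the action give $\lim_{t\to-\infty}\exp(t\beta)\cd(g\cd x)=x$. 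Since $\mupb(\exp(t\beta)\cd(g\cd x))$ is nondecreasing in $t$ and tends to $\mupb(x)=\max_X\mupb$ as $t\to-\infty$, it is constantly equal to $\max_X\mupb$; in particular $g\cd x\in\xmax=X_F$. As $R^{\beta+}$ is a group, this yields $R^{\beta+}\cd X_F=X_F$, hence $G^{\beta+}\subseteq Q^F$.

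For the reverse inclusion I would argue structurally. Being a closed subgroup containing the parabolic $G^{\beta+}$, the group $Q^F$ is itself parabolic, and by the structure theory of parabolic subgroups (see \cite{knapp-beyond}) it has the form $G^{\beta'+}$ for some $\beta'\in\liep$ with $G^{\beta+}\subseteq G^{\beta'+}$. Intersecting with $K$ and using Lemma \ref{parabolici-levi-compatto} together with the preliminary observation gives $K^{\beta'}=Q^F\cap K=K^F=K^\beta$, hence $\liek^{\beta'}=\liek^\beta$. It remains to upgrade this to $\lieg^{\beta'}=\lieg^\beta$. A $\theta$-equivariance argument (using $\theta\beta=-\beta$ and $\theta\beta'=-\beta'$, so that $\ad\beta'$ interchanges its $\pm\lambda$-eigenspaces) shows that $\beta$ lies in the zero eigenspace $\lieg^{\beta'}$ of $\ad\beta'$; thus $\beta\in\liep^{\beta'}$ and $[\liep^{\beta'},\liep^{\beta'}]\subseteq\liek^{\beta'}=\liek^\beta$. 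If there were $\xi\in\liep^{\beta'}$ with $\xi\notin\liep^\beta$, then $\eta:=[\xi,\beta]\in\liek^\beta$ would be nonzero and commute with $\beta$, forcing $\ad(\beta)^2\xi=0$ while $\ad(\beta)\xi=-\eta\neq0$; since $\ad\beta$ is self-adjoint for the inner product $-B(\cd,\theta\cd)$ (because $\beta\in\liep$), and self-adjoint operators satisfy $\ker T^2=\ker T$, this is impossible. Hence $\liep^{\beta'}=\liep^\beta$ and $\lieg^{\beta'}=\lieg^\beta$. Finally, $G^{\beta+}\subseteq G^{\beta'+}$ forces the positive eigenspace of $\ad\beta$ into that of $\ad\beta'$, and once the zero eigenspaces agree these positive eigenspaces have equal dimension, so they coincide; therefore $R^{\beta+}=R^{\beta'+}$ and $Q^F=G^{\beta'+}=G^{\beta+}$.

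The main obstacle is the inclusion $Q^F\subseteq G^{\beta+}$: one must invoke the structure theory guaranteeing that a closed overgroup of a parabolic is again parabolic of the form $G^{\beta'+}$, and then carry out the self-adjointness argument for $\ad\beta$ showing that mere equality of the compact parts $K^{\beta'}=K^\beta$ already forces equality of the Levi factors $\lieg^{\beta'}=\lieg^\beta$. This last rigidity statement is the delicate point. By contrast the inclusion $G^{\beta+}\subseteq Q^F$ is comparatively soft, resting only on the monotonicity of $\mupb$ along its gradient flow.
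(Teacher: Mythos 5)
Your proposal is correct and follows the same two--step skeleton as the paper's proof (first $G^{\beta+}\subseteq Q^F$, then a rigidity argument for the reverse inclusion), but you justify both steps differently. For $G^{\beta+}\subseteq Q^F$ the paper quotes Proposition 2.5 of \cite{heinzner-schwarz-stoetzel}, which gives the stronger statement that the vector fields of $\lier^{\beta+}$ vanish identically on $X_F$; your monotonicity argument along the gradient flow of $\mupb$ is a correct, self--contained substitute that only shows $R^{\beta+}$ preserves $X_F$, which is all that is needed. For $Q^F\subseteq G^{\beta+}$ the paper stays at the Lie algebra level: $\lieq^F\supseteq\lieg^{\beta+}$ is parabolic, $\lieq^F\cap\liek=\liek^\beta$ by Lemma \ref{parabolici-levi-compatto}, Lemma 3.7 of \cite{biliotti-ghigi-heinzner-2} then forces $\lieq^F=\lieg^{\beta+}$, and finally $Q^F\subseteq N_G(\lieg^{\beta+})=G^{\beta+}$. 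You instead pass to the group level (writing $Q^F=G^{\beta'+}$) and reprove the rigidity by hand via $\theta$--equivariance and self--adjointness of $\ad\beta$ with respect to $-B(\cd,\theta\,\cd)$; this is essentially the content of the cited Lemma 3.7, so the trade is only self--containedness versus brevity. One small elision: your self--adjointness argument yields only $\liep^{\beta'}\subseteq\liep^\beta$, whereas the concluding dimension count needs the equality $\lieg^{\beta'}=\lieg^\beta$. The missing inclusion $\liep^\beta\subseteq\liep^{\beta'}$ follows from the same $\theta$--trick you applied to $\beta$ itself: any $\xi\in\liep^\beta\subseteq\lieg^{\beta+}\subseteq\lieg^{\beta'+}$ satisfies $\theta\xi=-\xi$, so its components in the strictly positive eigenspaces of $\ad\beta'$ must vanish (alternatively, $\lieg^{\beta+}\subseteq\lieg^{\beta'+}$ already gives $\dim\lieg^{\beta}\le\dim\lieg^{\beta'}$). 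With that one line added your proof is complete.
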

\begin{proof}
  Observe that by definition $Q^F$ is a closed subgroup of $G$.  Let
  $\beta\in \CF^{K^F}$. Then $F=F_\beta (E)$ and, by definition of
  $K^F$, we get $K^F=K^\beta$. The set $X_F=\{x \in X: \mupb(x) =
  \max_X \mupb\}$ is $G^\beta$-stable.  Fix $p\in X_F$ and consider
  the orbit $G\cd p$, which is a smooth submanifold contained in
  $X$. By Proposition 2.5 in \cite{heinzner-schwarz-stoetzel} (see
  also Proposition 2.1 in \cite{biliotti-ghigi-heinzner-2}) we get
  that $\xi_X(x) = 0$ for any $\xi \in \lier^{\beta+}$ and for any
  $x\in X_F$.  Therefore $G^{\beta+} \cdot p \subset X_F$.  Hence
  $G^{\beta+} \subset Q^F$ and the Lie algebra $\mathfrak q^F$ of
  $Q^F$ is parabolic. On the other hand by Lemma
  \ref{parabolici-levi-compatto}, we have $\mathfrak q^F \cap
  \liek=\lieg^{\beta+}\cap \liek=\liek^\beta$ and so by Lemma 3.7
  \cite{biliotti-ghigi-heinzner-2} we conclude that $\mathfrak
  q^F=\lieg^{\beta+}$. Since $Q^F \subset N_G
  (\lieg^{\beta+})=G^{\beta+}$ we get $Q^F =G^{\beta+}$.
\end{proof}
\begin{remark}\label{rem-parabolic}
  If $\beta'\in \CF^{K^{F}}$, then $Q_F=G^{\beta'+}=G^{\beta+}$. By
  Lemma 2.8 in \cite{biliotti-ghigi-heinzner-2}, we have
  $[\beta,\beta']=0$, $G^\beta=G^{\beta'}$ and $R^{\beta
    +}=R^{\beta'+}$.
\end{remark}
Let $Q^{F-}=\Theta (Q^F)$, where $\Theta:G \lra G$ denotes the Cartan
involution.  The subgroup $Q^{F-}$ is parabolic and depends only on
$F$. The subgroup $L^F:=Q^F \cap Q^{F-}$ is a Levi factor of both
$Q^F$ and $Q^{F-}$. Let $\beta \in \CF^{K^F}$. Then $Q^F=G^{\beta+}$,
$L^F=G^\beta$ and we have the projection
\[
\pi^{\beta+}:G^{\beta+} \lra G^\beta, \qquad \pi^{\beta+} (g)=\lim_{t
  \mapsto + \infty } \exp (t\beta)h \exp (-t\beta),
\]
respectively
\[
\pi^{\beta+}:G^{\beta-} \lra G^\beta, \qquad \pi^{\beta-} (g)=\lim_{t
  \mapsto - \infty } \exp (t\beta)h \exp (-t\beta).
\]
\begin{lemma}\label{projection-parabolic}
  If $\beta\in \CF^{K^F}$, then the projections $\pi^{\beta+}$ and
  $\pi^{\beta-}$ depend only on $F$.
\end{lemma}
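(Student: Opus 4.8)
The plan is to show that the two projections $\pi^{\beta+}$ and $\pi^{\beta-}$ do not depend on the choice of $\beta \in \CF^{K^F}$, only on the face $F$. Since by Proposition \ref{prop-face-parabolic} we already have $Q^F = G^{\beta+}$ independently of $\beta \in \CF^{K^F}$, the domains $G^{\beta+}$ and $G^{\beta-} = Q^{F-}$ are already known to be independent of $\beta$. So the content of the lemma is that the formula for the projection, which a priori uses the specific one-parameter group $\exp(t\beta)$, gives the same map for two different choices.

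Let me sketch the argument for $\pi^{\beta+}$; the case $\pi^{\beta-}$ is symmetric. The key tool is Remark \ref{rem-parabolic}: if $\beta, \beta' \in \CF^{K^F}$, then $[\beta,\beta']=0$, $G^\beta = G^{\beta'}$, and $R^{\beta+} = R^{\beta'+}$. First I would recall the semidirect product structure $G^{\beta+} = G^\beta \rtimes R^{\beta+}$, so that every $g \in G^{\beta+}$ factors uniquely as $g = \ell \cdot r$ with $\ell \in G^\beta = L^F$ and $r \in R^{\beta+}$. The projection $\pi^{\beta+}$ is by construction the retraction onto the Levi factor $G^\beta$ along $R^{\beta+}$, i.e. $\pi^{\beta+}(\ell r) = \ell$. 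Concretely, if $\ell \in G^\beta$ then $\exp(t\beta)\ell\exp(-t\beta) = \ell$ for all $t$ since $\ell$ centralizes $\beta$, while if $r \in R^{\beta+}$ then $\lim_{t\to+\infty}\exp(t\beta)r\exp(-t\beta) = e$ by definition of the unipotent radical. So $\pi^{\beta+}$ is completely determined by the pair $(G^\beta, R^{\beta+})$ as the projection of the semidirect product onto its first factor.

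Now I would invoke Remark \ref{rem-parabolic} to conclude: since $G^\beta = G^{\beta'}$ and $R^{\beta+} = R^{\beta'+}$, the two semidirect product decompositions of $Q^F = G^{\beta+} = G^{\beta'+}$ coincide, hence the two retractions onto the common Levi factor along the common unipotent radical are literally the same map. In formulas, for $g = \ell r$ we have $\pi^{\beta+}(g) = \ell = \pi^{\beta'+}(g)$. The same reasoning with $t \to -\infty$ and the opposite parabolic $G^{\beta-} = \Theta(G^{\beta+})$, using that $\Theta(\beta) = -\beta$ and that $R^{\beta-} = R^{\beta'-}$ likewise follows from Remark \ref{rem-parabolic} applied after $\Theta$, gives independence of $\pi^{\beta-}$. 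The main thing to verify carefully is that the projection really is the algebraic retraction onto the Levi along the unipotent radical, rather than something that could secretly depend on the normalization of $\beta$; once that identification is made, independence is immediate from Remark \ref{rem-parabolic} and no genuine obstacle remains.
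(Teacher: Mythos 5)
Your proposal is correct and follows essentially the same route as the paper: write $g = hr$ with $h \in G^\beta$ and $r \in R^{\beta+}$, observe that the limit defining $\pi^{\beta+}$ returns $h$, and then use Remark \ref{rem-parabolic} ($G^\beta = G^{\beta'}$, $R^{\beta+} = R^{\beta'+}$) to conclude the decomposition, and hence the projection, is the same for $\beta$ and $\beta'$. No substantive difference from the paper's argument.
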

% This follows from the fact that the kernel of the projection is the
% unipotent radical which does not depend on $\beta \in C_F^{K_F}$.
\begin{proof}
  Let $g\in G^{\beta +}$. We know that $g=hr$, where $h\in G^{\beta}$
  and $r\in R^{\beta +}$. Then
  \[
  \pi^{\beta+}(g)= \lim_{t \mapsto + \infty } \exp (t\beta)g \exp
  (-t\beta)=h\lim_{t \mapsto + \infty } \exp (t\beta)r \exp
  (-t\beta)=h.
  \]
  Since $G^\beta = G^{\beta'}$ and $R^{\beta +} = R^{\beta'+}$ the
  decomposition $g=hr$ is the same for both groups and
  $\pi^{\beta+}(g) = \pi^{\beta'+}(g)$.
  % If $\beta' \in \CF^{K^F}$, then by remark \ref{rem-parabolic},
  % $G^\beta=G^{\beta'}$ and $R^{\beta\pm}=R^{\beta' \pm}$. Then
  % \[
  % \lim_{t \mapsto + \infty } \exp (t\beta')h \exp
  % (-t\beta')=g\lim_{t \mapsto + \infty } \exp (t\beta')r \exp
  % (-t\beta')=g.
  % \]
  The same argument works for $\pi^{\beta-}$.
\end{proof}
Now assume that $X$ is a $G$-stable \emph{compact submanifold} of $Z$.

For $\beta \in C_F^{K_F}$ set $X^{\beta-}_F:=\{p\in X:\,
\lim_{t\mapsto +\infty} \exp(t\beta)\cdot p \in X_F \}$. Then the map
\begin{gather}
  p^{\beta-}: X^{\beta-}_F \lra X_F, \qquad
  p^{\beta-}(x)=\lim_{t\mapsto +\infty} \exp(t\beta)\cdot x
  \label{pinco}
\end{gather}
is well-defined, $G^\beta$-equivariant, surjective and its fibers are
$R^{\beta-}$-stable.

More generally one can consider $p^{\beta-}$ as a map from
$X^{\beta-}=\{y\in X:\ \lim_{t\mapsto +\infty} \exp (t\beta) \cdot x$
exists $\}$ to $ X^\beta $. In general however this map is not even
continuous \cite[Example 4.2]{heinzner-schwarz-stoetzel-arxiv}. To
ensure continuity and smoothness it is enough that the topological
Hilbert quotient $X^{\beta-}//G^\beta$ exists. Using the notation of
\cite{heinzner-schwarz-stoetzel-arxiv} and choosing $r=\max_{X}
\mup^\beta$, we have $X_F =\xmax = X^\beta_r$ and $X_r^{\beta-} =
X_F^{\beta -}$. Thus Prop. 4.4 of
\cite{heinzner-schwarz-stoetzel-arxiv} applies and yields that
$X_F^{\beta -}$ is an open $G^{\beta-} $--stable subset of $X$ and
that \eqref{pinco} is smooth deformation retraction onto $X_F$.  Using
$\pi^{\beta-}$ one defines an action of $Q^{F-}=G^{\beta-}$ on $X_F$
by setting $g \cd x=\pi^{\beta-}(q)\cd x$.  This just depends on
$F$. With respect to this action the map $p^{\beta-}$ becomes
$Q^{F-}$--equivariant.
\begin{lemma}\label{retraction-parabolic}
  The set $X^{\beta-}_F$ and the map $p^{\beta-}$
  % is an open $Q^{F-}$-stable subset of $X$. Moreover $X^{\beta-}_F$
  % and the \sout{smooth} retraction $p^{\beta-}: X^{\beta-}_F \lra
  % X_F$
  do not depend on the choice of $\beta\in \CF^{K^F}$.
\end{lemma}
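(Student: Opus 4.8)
The plan is to show that both $X_F^{\beta-}$ and $p^{\beta-}$ coincide with $\beta$--independent objects: namely that $X_F^{\beta-}=R^{\beta-}\cdot X_F$ and that $p^{\beta-}$ is the induced collapse onto $X_F$. By Remark \ref{rem-parabolic} applied to an arbitrary pair $\beta,\beta'\in\CF^{K^F}$ we have $[\beta,\beta']=0$ and $R^{\beta+}=R^{\beta'+}$; applying the Cartan involution $\Theta$, which carries $R^{\beta+}$ to $R^{\beta-}$ and $G^{\beta+}$ to $G^{\beta-}$, gives $R^{\beta-}=R^{\beta'-}$ and $G^{\beta-}=G^{\beta'-}$. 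Moreover $X_F=\mup\meno(F)$ is intrinsic to $F$ by Lemma \ref{face-set}, and since $X_F=\xmax$ is a union of connected components of $X^\beta$ it is fixed pointwise by $\exp(\R\beta)$; the same holds for $\beta'$, so $X_F\subset X^\beta\cap X^{\beta'}$.

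First I would prove the inclusion $R^{\beta-}\cdot X_F\subset X_F^{\beta-}$ together with the value of the retraction on it. Let $r\in R^{\beta-}$ and $q\in X_F$. Since $\exp(t\beta)q=q$ and $\exp(t\beta)r\exp(-t\beta)\to e$ as $t\to+\infty$ (because $\lier^{\beta-}$ consists of negative $\ad\beta$--eigenspaces), we get $\exp(t\beta)\cd(r\cd q)=\bigl(\exp(t\beta)r\exp(-t\beta)\bigr)\cd q\to q$, so $r\cd q\in X_F^{\beta-}$ and $p^{\beta-}(r\cd q)=q$. Running the identical computation with $\beta'$ in place of $\beta$, using $r\in R^{\beta-}=R^{\beta'-}$ and $\exp(t\beta')q=q$, shows $r\cd q\in X_F^{\beta'-}$ and $p^{\beta'-}(r\cd q)=q=p^{\beta-}(r\cd q)$. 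Hence on the $\beta$--independent set $R^{\beta-}\cdot X_F$ the two retractions agree and equal the natural projection onto $X_F$.

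It remains to prove the reverse inclusion $X_F^{\beta-}\subset R^{\beta-}\cdot X_F$, which is the main obstacle. For $q\in X_F$ the fibre $(p^{\beta-})\meno(q)$ is the stable set of the point $q$ for the flow $\exp(t\beta)$; by Proposition 4.4 of \cite{heinzner-schwarz-stoetzel-arxiv} (the opposite-sign analogue of Proposition 2.5 of \cite{heinzner-schwarz-stoetzel}, already used in Proposition \ref{prop-face-parabolic}) the smooth retraction $p^{\beta-}$ exhibits $X_F^{\beta-}$ as a locally trivial bundle over $X_F$ whose fibre over $q$ is exactly the orbit $R^{\beta-}\cd q$. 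Collecting the fibres over $X_F$ yields $X_F^{\beta-}=R^{\beta-}\cdot X_F$. Since the right-hand side depends on $\beta$ only through $R^{\beta-}=R^{\beta'-}$, we obtain $X_F^{\beta-}=X_F^{\beta'-}$, and combined with the previous paragraph $p^{\beta-}=p^{\beta'-}$ on this common domain. The delicate point is precisely the identification of each fibre of $p^{\beta-}$ with a single $R^{\beta-}$--orbit, i.e. that the stable manifold of $q$ integrates exactly the negative $\ad\beta$--eigendirections; this is where the compact-submanifold hypothesis and the existence of the Hilbert quotient enter, guaranteeing the smoothness and bundle structure furnished by \cite{heinzner-schwarz-stoetzel-arxiv}.
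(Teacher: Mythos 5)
Your argument breaks down at the reverse inclusion $X_F^{\beta-}\subset R^{\beta-}\cdot X_F$. The fibres of $p^{\beta-}$ are $R^{\beta-}$-\emph{stable}, as stated in the paper, but they are not single $R^{\beta-}$-orbits, and Proposition 4.4 of \cite{heinzner-schwarz-stoetzel-arxiv} does not assert this: it gives openness of $X_F^{\beta-}$ and smoothness of the retraction, nothing more. The fibre over $q\in X_F$ is the stable manifold of $q$ for the flow of $\exp(t\beta)$, whose tangent space at $q$ is the full negative eigenspace $V_-$ of the Hessian of $\mupb$; this contains $\lier^{\beta-}\cdot q$ but is in general strictly larger. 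A quick sanity check already rules out your identity: the paper records (just before the Lemma) that $X_F^{\beta-}$ is \emph{open} in $X$, whereas $R^{\beta-}\cdot X_F$ has dimension at most $\dim R^{\beta-}+\dim X_F$, which need not equal $\dim X$. Concretely, take $G=U^{\C}=\Sl(2,\C)$ acting diagonally on $X=Z=\PP^1\times\PP^1$, $\beta=\diag(1,-1)$, and $F$ the exposed point $F_\beta(E)$ of $E=\widehat{\mup(X)}$. Then $X_F$ is the single point $([1:0],[1:0])$, $R^{\beta-}\cdot X_F$ is a one-dimensional orbit (a diagonal affine line), but $X_F^{\beta-}$ is the two-dimensional basin $(\PP^1\setminus\{[0:1]\})\times(\PP^1\setminus\{[0:1]\})$. (Even more drastically, for $G=\exp(\R\beta)$ abelian acting on $\PP^1$ one has $R^{\beta-}=\{e\}$ while the basin of the attracting fixed point is an open disc.) The "Bruhat cell'' picture you are invoking is exact only when $X$ is a single closed orbit such as a flag manifold; here it is only an analogue. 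Your first paragraph is fine as far as it goes ($R^{\beta-}=R^{\beta'-}$ via $\Theta$ and Remark \ref{rem-parabolic}, and $p^{\beta-}=p^{\beta'-}$ on $R^{\beta-}\cdot X_F$), but it only establishes agreement on a subset that is in general much smaller than the common domain you need.

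The paper's proof works locally instead of group-theoretically: around each $p\in X_F$ it takes a slice $\Psi_p:S_p\to\Omega_p$ equivariant for the compatible abelian group $B=\exp(\R\beta\oplus\R\beta')$ (possible since $[\beta,\beta']=0$ and $X_F\subset X^B$), uses the Hessian splitting $T_pX=V_0\oplus V_-$ with $V_0=T_pX_F$ --- which is the \emph{same} for $\beta$ and $\beta'$ because both commute and both attain their maximum on $X_F$ --- to show $X_F^{\beta-}=\Omega=X_F^{\beta'-}$ where $\Omega=\bigcup_p\Omega_p$, and reads off in the chart that both retractions are $y\mapsto\Psi_p(x_0)$. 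If you want to salvage your approach you would have to replace $R^{\beta-}\cdot X_F$ by this kind of local model of the unstable set; the group orbit alone does not fill it out.
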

\begin{proof}
  Set $\Gamma=\exp (\R\beta)$.  If $p\in X_F$ by the Slice Theorem
  \cite[Thm. 3.1]{heinzner-schwarz-stoetzel} there are open
  neighborhoods $S_p\subset T_p X$ and $\Omega_p\subset X$ and a
  $\Gamma$-equivariant diffeomorphism $\Psi_p:S_p \lra \Omega_p$, such
  that $0\in S_p$, $p\in \Omega_p$, $\Psi_p(0)=p$. Since $p$ is a
  maximum of $\mup^\beta$ restricted to $X$, the following orthogonal
  splitting $T_p X=V_0 \oplus V_{-}$ with respect to the Hessian of
  $\mup^\beta$ holds.  Here $V_0$ denotes the kernel of the Hessian of
  $\mup^\beta$ and $V_{-}$ denotes the sum of eigenspaces of the
  Hessian of $\mup^\beta$ corresponding to negative eigenvalues.  We
  also point out that $V_0=T_p X_F$ and $S_p = \{ x_0 + x_- : x_0 \in
  S_p \cap V_0 , x_-\in V_-\}$, see \cite{heinzner-stoetzel-global}.
  It follows that $\Omega_p \subset X_F^{\beta-}$.  Set $\Omega:=
  \bigcup_{p\in X_F} \Omega_p$. By what we just proved, $\Omega
  \subset X_F^{\beta-}$. On the other hand $\Omega $ is an open
  $\Gamma$-invariant neighbourhood of $X_F$, so $X_F^{\beta-} \subset
  \Omega$.  So $X_F^{\beta-} = \Omega$. If $\beta'$ is another vector
  of $C_F^{K^F}$, set $B=\exp (\R \beta \oplus \R \beta')$. This is a
  compatible abelian subgroup and $X_F \subset X^B$. So we may choose
  the open subsets $\Omega_p$ above to be $B$-stable. Therefore we get
  $X^{\beta'-}=\Omega$ as well. This proves that $X_F^{\beta-} =
  X_F^{\beta'-}$.

  Next we show that $p^{\beta-} = p^{\beta'-}$.  First observe that
  $p^{\beta-} (y) = p^{\beta'-}(y)$ if $y\in \Omega$. Indeed if $y\in
  \Omega_p$ we can study the limit using the diffeomorphism $\Psi_p:
  S_p \ra \Omega_p$.  The decomposition $T_pX =V_0 \oplus V_-$ is the
  same for $\beta$ and $\beta'$ since they commute and attain their
  maxima on $X_F$.  Therefore if $x=\Psi_p\meno(y) = x_0 + x_-$, then
  \begin{gather}
    \label{formula}
    p^{\beta-} (y) = \Psi_p( x_0)= p^{\beta'-}(y).
  \end{gather}
  If $p\in X^{\beta-}_F$ and $q=\lim_{t \mapsto +\infty } \exp (t
  \beta )\cd p \in X_F$, there is $t_1\in \R$, such that $ \exp (t
  \beta )\cd p \in \Omega$.
  % By the above discussion we apply the Slice Theorem at $q\in X_F$
  % with respect to $B$ getting a $B$-stable neighborhood $\Omega$ of
  % $q$ in $X$ satisfying (\ref{formula}). Since $q=\lim_{t \mapsto
  % +\infty } \exp (t \beta )\cd p$ there exists $t_o\in \R$ such that
  % $\exp (t_o\beta)\cd p \in \Omega$. On the other hand $\Omega$ is
  % $B$-stable and so
  % \[
  % \exp (t \beta' )\cd p=\exp(t\beta') \exp(-t_o \beta) \exp (t_o
  % \beta)\cd p \in \Omega\cap X^{\beta-}
  % \]
  % Let $t_1\in \R$. Since $B$ fixes $X_F$ pointwise, we get $\lim_{t
  % \mapsto +\infty } \exp (t \beta' ) \cd p=\lim_{t \mapsto +\infty}
  % \exp(t\beta') (\exp (t_1 \beta') \cd p)$.
  Therefore
  \[
  \begin{split}
    \lim_{t\mapsto +\infty} \exp (t \beta' )\cd p &= \lim_{t\mapsto +\infty} \exp(t\beta') (\exp(t_1 \beta') \cd p) \\
    &= \lim_{t\mapsto +\infty} \exp(t\beta) (\exp(t_1 \beta') \cd p) \ (\mathrm{by}\ \ref{formula})\\
    &=\exp(t_1 \beta') (\lim_{t\mapsto +\infty} \exp(t\beta) \cd p) \\
    &=\lim_{t\mapsto +\infty} \exp(t\beta) \cd p.
  \end{split}
  \]
\end{proof}
By the above Lemma if $F$ is a face and $\beta\in C_F^{K^F}$, we can
set $X_F^{-}:=X^{\beta-}_F$ and $p^{F-}:=p^{\beta-} : X_F^{-} \lra
X_F$.
% The map $p^{F-}$ is a \sout{smooth} retraction which is
% $L^F$-equivariant.  Using $p^{F-}$ one define an action of $Q^{F-}$
% on $X_F$ by setting $g \cd x=p_F(q)\cd x$. Then it follows that the
% smooth retraction $p_F$ becomes $Q^{F-}$ equivariant and so this
% gives a Slice Theorem for the $Q^{F-}$-action at every points in
% $X_F$.  Summing up, by Propositions \ref{momentum-polytope},
% \ref{prop-face-parabolic} and Lemma \ref{retraction-parabolic}, we
% get the following link between parabolic subgroups of $G$ and faces
% of $\cc$.
\begin{teo}\label{urg}
  For any face $F\subset \cc$, the set $X_F$ is closed and
  $L^{F}$-stable, $X_F^{-}$ is an open $Q^{F-}$--stable neighborhood
  of $X_F$ in $X$ and the map $p^{F-}$ is a smooth
  $Q^{F-}$--equivariant deformation retraction of $X_F^{-}$ onto $
  X_F$.
\end{teo}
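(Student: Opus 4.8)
The plan is to collect the facts established in the preceding paragraphs and lemmas, the only new input being the standing assumption that $X$ is a compact submanifold, which guarantees the existence of the topological Hilbert quotient and hence lets us quote Proposition 4.4 of \cite{heinzner-schwarz-stoetzel-arxiv}.

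First I would dispose of the statements concerning $X_F$ itself. By Lemma \ref{face-set} the set $X_F=\xmax$ is the set of maximum points of the continuous function $\mupb$ on the compact space $X$, hence closed; equivalently it is a finite union of connected components of the closed fixed point set $X^\beta$. It is $L^F$-stable because $L^F=G^\beta$ and $X_F$ was already seen to be $G^\beta$-stable at the point where it was defined.

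Next I would fix $\beta\in\CF^{K^F}$, which is nonempty by Proposition $5$ of \cite{biliotti-ghigi-heinzner-1-preprint}, so that $Q^{F-}=G^{\beta-}$ and $L^F=G^\beta$. Choosing $r=\max_X\mupb$ one has $X_F=X^\beta_r$ and $X_F^{\beta-}=X_r^{\beta-}$, so the hypotheses of Proposition 4.4 of \cite{heinzner-schwarz-stoetzel-arxiv} are met. That proposition yields that $X_F^{\beta-}$ is an open $G^{\beta-}$-stable subset of $X$ and that $p^{\beta-}$ is a smooth deformation retraction onto $X_F$. By Lemma \ref{retraction-parabolic} both $X_F^-=X_F^{\beta-}$ and $p^{F-}=p^{\beta-}$ are independent of the choice of $\beta$, so these conclusions become exactly the asserted statements about the open $Q^{F-}$-stable set $X_F^-$ and the smooth retraction $p^{F-}$. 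Since $X_F\subset X^\beta$ is fixed pointwise by $\exp(t\beta)$, every point of $X_F$ lies in $X_F^-$, so $X_F^-$ is indeed a neighborhood of $X_F$.

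Finally, the $Q^{F-}$-equivariance of $p^{F-}$ follows from the action of $Q^{F-}=G^{\beta-}$ on $X_F$ defined through the projection $\pi^{\beta-}$, which depends only on $F$ by Lemma \ref{projection-parabolic}; this equivariance was already recorded just before Lemma \ref{retraction-parabolic}. The only genuine obstacle is verifying that Proposition 4.4 of \cite{heinzner-schwarz-stoetzel-arxiv} is applicable, i.e.\ that the topological Hilbert quotient $X^{\beta-}//G^\beta$ exists; here this is ensured precisely by the hypothesis that $X$ is a compact submanifold, and once it is granted the theorem is a matter of reassembling the statements above.
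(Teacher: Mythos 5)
Your proposal is correct and follows exactly the route of the paper: the paper gives Theorem \ref{urg} no separate proof because it is precisely the assemblage of the facts you cite — the $G^\beta$-stability of $\xmax$ noted at its definition, the application of Prop.\ 4.4 of \cite{heinzner-schwarz-stoetzel-arxiv} with $r=\max_X\mupb$, the $Q^{F-}$-action via $\pi^{\beta-}$, and the $\beta$-independence from Lemmas \ref{projection-parabolic} and \ref{retraction-parabolic}.
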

\def\cprime{$'$}

\end{document}